\newtheorem{lemma}{Lemma}
\newtheorem{theorem}{Theorem}
\newtheorem{proposition}{Proposition}
\newtheorem{fact}{Fact}
\title{Graph Configurations and Independent Bondage Numbers of Planar Graphs}
\author{
  E.G.K.M.Gamlath\thanks{University of Mississippi, University, MS 38677, (\texttt{egkmgamlath@gmail.com})}
  \and
  Bing Wei\footnotemark[1]
  \and
  Talmage James Reid\footnotemark[1]
}
\begin{document}
\maketitle
 
\begin{abstract}
The independent domination number of a finite graph G is the minimum cardinality of an independent dominating set of vertices. The independent bondage number of G is the minimum cardinality of a set of edges whose deletion results in a graph with a larger independent domination number than that of G. In this research, we enhance the existing upper bound on the independent bondage number of a planar graph with a minimum degree of at least three by identifying specific configurations within such planar graphs.
\end{abstract}

\section{Introduction and Main Results}
\label{introductiondefs}

Domination and bondage numbers are widely studied graph parameters, with various types under exploration. This paper focuses on the independent domination number and the independent bondage number, as surveyed in \cite{goddard2013independent} and \cite{xu2013bondage}, respectively.

The bondage number, introduced in 1983 by Bauer, Harary, Nieminen, and Suffel \cite{bauer1983domination}, is defined as the domination line-stability number of a graph. In 1990, Fink et al. \cite{fink1990bondage} highlighted its usefulness in measuring the vulnerability of an interconnection network under link failure. They conjectured that the bondage number of a graph is at most its maximum degree plus one. This paper enhances the established upper bound for the independent bondage number of planar graphs with a minimum degree of at least three. The improvement is achieved by incorporating specific configurations within these planar graphs.

Before presenting our main results, we provide definitions and terminology. We consider only finite, undirected, and simple planar graphs throughout the paper. Let $G$ be a graph with vertex set $V(G)$ and edge set $E(G)$. Let $v$ be a vertex and $f$ be a face of $G$, then
the degree of a vertex $v$, and the degree of a face $f$ are the number of vertices that are incident with them. These quantities are denoted by $d(v)$ and $d(f)$, respectively. For a vertex $v$ in $G$, the open neighborhood of $v$ is the vertices set of $G$ that is incident with $v$, and is denoted by $N_G(v)$. Thus, $d(v)=|N(v)|.$ The closed neighborhood of $G$ is denoted by $N[v]$, and $N[v]=N(v)\cup \{v\}$. The minimum and maximum degree of $G$ are denoted by $\delta(G)$ and $\Delta(G)$, respectively.

Let $j \in \mathbb{N}$. Then, a $j$-vertex (face), $j^+$-vertex (face), and $j^-$-vertex (face) refer to a vertex (face) that has a degree exactly $j$, a vertex that has a degree at least $j$, and a vertex that has a degree at most $j$, respectively. The set of faces that are incident with $v$ is denoted by $F(v)$. Let $N_j(v)$, $N_{j^+}(v)$, and $N_{j^-}(v)$, respectively, denote the set of neighbors of $v$ with degree exactly $j$, at least $j$, and at most $j$. Let $F_j(v)$, $F_{j^+}(v)$, and $F_{j^-}(v)$, respectively, denote the set of faces incident with $v$ with exactly degree $j$, at least $j$, and at most $j$. An edge with the sum of the degrees of its end vertices at most $j$ is called a $j^-$-{\it edge}. 

Define an $n$-{\it fan} with hub $v$ and rim $\{u_1,u_2,\ldots, u_n\}$ to be a set of vertices $\{v,u_1,u_2,\ldots, u_n\}$ that is maximal with respect to the property that  $(v,u_i)$ are edges for $i\in \{1,2,\dots, n\}$ and $(u_i,u_{i+1})$  are edges for $i\in \{1,2,\dots,n-1\}$. In addition, if  $(u_1,u_n) \in E(G)$, then we say that this set of vertices is an n-{\it wheel}. For example, Figure~\ref{nineconfig} $e(i)$ contains a $9$-fan with hub $v$ and rim $\{u_1,u_2,\ldots,u_9\}$. Note that this is a loose definition of the terms fan and wheel as we are only naming the vertices of the subgraph and we do not demand that the vertices on the rim of the fan (wheel) have degree three.

A \textbf{dominating set} of $G$ is defined as a set of vertices $D \subseteq V(G)$ such that each vertex of $G$ is either in $D$ or incident with a vertex in $D$. The \textbf{domination number} $\gamma(G)$ of $G$ is the cardinality of the smallest dominating set. The \textbf{bondage number} of a graph $G$, denoted by $b(G)$, is the minimum cardinality among all edge sets $B \subseteq E(G)$ such that $\gamma(G-B) > \gamma(G)$. A set of vertices of $G$ that is both independent and dominating is called an \textbf{independent dominating set}. The minimum cardinality among all independent dominating sets is called the \textbf{independent domination number}, denoted by $\gamma^i_{\ }(G)$. The \textbf{independent bondage number} of a graph $G$ is denoted by $b_i(G)$ and is defined as the minimum cardinality among all edge sets $B \subseteq E(G)$ such that $\gamma^i_{\ }(G-B) > \gamma^i_{\ }(G)$.
\vspace{0.3cm}

Our main result is the following theorem. 
\begin{theorem}
\label{IBNconfigurationproof}
If $G$ is a planar graph with $\delta(G) \geq 3$, then $b_i(G) \leq 8$.
\end{theorem}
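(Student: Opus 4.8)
The plan is to prove Theorem~\ref{IBNconfigurationproof} by the reducible–configuration method. Call a planar graph $H$ with $\delta(H)\ge 3$ \emph{good} if $b_i(H)\le 8$, i.e.\ if there is an edge set $B$ with $|B|\le 8$ and $\gamma^i_{\ }(H-B)>\gamma^i_{\ }(H)$. First I would isolate a finite list of local structures — the "specific configurations" of the abstract, which I expect to include light edges $uv$ with $d(u)+d(v)$ small, vertices having several degree-$3$ neighbours, and the fans and wheels of degree-$3$ vertices about a common hub referenced in the excerpt — and prove two things: (a) if $G$ contains one of these configurations, then $G$ is good; and (b) every planar $G$ with $\delta(G)\ge 3$ contains at least one of them. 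Statements (a) and (b) together give the theorem.

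For part (a) the mechanism is uniform: given the configuration, I would exhibit at most $8$ edges whose deletion "pins down" some vertex $w$ — forcing $w$ into every minimum independent dominating set, or forcing it out — in a manner incompatible with any independent dominating set of the old size. Concretely, around a low-degree vertex $u$ (which a planar graph with $\delta\ge 3$ must have, of degree $3$, $4$, or $5$) one deletes all but one or two edges at $u$ together with a few edges at low-degree neighbours of $u$, and then runs a short case analysis on where a minimum independent dominating set of $G-B$ meets $N[u]$ and the neighbourhoods of those low-degree neighbours. The subtle point — and the reason the configurations must be chosen with care — is that independence is a global constraint: one cannot simply declare "vertex $x$ is now undominated," one must verify that \emph{no} independent dominating set of the original cardinality survives, which requires understanding all minimum independent dominating sets in the vicinity of the configuration. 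This is precisely where the improvement over the previous bound should come from: richer configurations make more graphs good within a budget of $8$ edges.

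For part (b) I would run a discharging argument on a plane embedding of $G$ (assuming $G$ connected without loss of generality). Assign initial charge $\mu(x)=d(x)-4$ to every vertex and every face $x$; by Euler's formula the total is $\sum_v (d(v)-4)+\sum_f (d(f)-4)=4|E(G)|-4\bigl(|V(G)|+|F(G)|\bigr)=-8<0$, and the only objects with negative initial charge are $3$-vertices and triangular faces. Assuming $G$ contains none of the reducible configurations, I would design discharging rules sending charge from large faces and high-degree vertices to incident $3$-vertices and from large faces to incident triangular faces, and then check, case by case on $d(v)$ and $d(f)$ and using the local degree restrictions forced by the absence of the configurations, that every vertex and every face ends with nonnegative charge — contradicting the total $-8$.

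The main obstacle I anticipate is part (a): building, for each configuration, an explicit edge budget of at most $8$ and a watertight proof that deleting those edges raises $\gamma^i_{\ }$. In contrast with the ordinary bondage number, where one can often argue locally that a single vertex loses all its dominators, here one must exclude every competing minimum independent dominating set of $G-B$, so each configuration has to be engineered so that the independence constraint propagates predictably; doing this for a family of configurations rich enough to satisfy the discharging in part (b), while never exceeding $8$ deleted edges, is the real balancing act. A secondary difficulty is consistency of the two halves — the excluded family must simultaneously be strong enough to force nonnegative final charge everywhere and have each member genuinely reducible within the budget.
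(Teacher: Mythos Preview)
Your two-part plan --- discharging to show that every planar graph with $\delta\ge 3$ contains one of a finite list of configurations, then exhibiting for each configuration an edge set of size at most $8$ whose deletion raises $\gamma^i$ --- is exactly the architecture of the paper's proof. Two points of comparison are worth recording. First, the paper uses \emph{vertex charging} ($d(v)-6$ on vertices, $2d(f)-6$ on faces, total $-12$) rather than your balanced charging; with vertex charging $3$-faces begin at charge $0$ and never need to receive anything, which simplifies the rules considerably since the configurations are built out of hubs surrounded by many $3$-faces. Second, for the reducibility step the paper does not argue case by case for the light-edge configurations: if $G$ has a $9^-$-edge, a $10^-$-edge on a triangle, or an $11^-$-edge on two triangles, then $b_i(G)\le 8$ follows immediately from the Priddy--Wei bound $b_i(G)\le d(u)+d(v)-|N(u)\cap N(v)|-1$ (Theorem~\ref{PriddyWei}). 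Only the remaining fan/wheel configurations $(d)$--$(h)$ require an explicit edge set, and there the paper packages the common argument into a single lemma (Lemma~\ref{lem2}): one isolates a $3$-vertex $u_1\in N(v)$ completely, forcing $u_1\in I'$ and hence $v\notin I'$, and then the edge deletions are arranged so that several further low-degree neighbours of $v$ are cascaded into $I'$, after which $(I'\setminus D)\cup\{v\}$ gives a smaller independent dominating set of $G$. Your description of the mechanism in part~(a) is correct in spirit but would benefit from isolating this replacement lemma up front rather than redoing the case analysis for each configuration.
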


We find some configurations of a planar graph with minimum degree at least $3$ to prove the Theorem \ref{IBNconfigurationproof}. We prove that,

\begin{theorem}
\label{maintheorem}
A planar graph  $G$ with $\delta(G) \geq 3$ contains an edge $e$ as in cases (a), (b), or (c), or a vertex $v$ and  an independent subset $I$ of the neighbor set 
$N(v)=\{u_i:i\in \{1,2,\dots,d(v)\}\}$ as given in cases (d) through (h).

(see the appropriate subcase of Figure~\ref{nineconfig} for   the arrangement of faces in each of configurations $(d)$ through $ (h)$).  
\begin{itemize} 
    \item[(a)] The edge $e$ is 
    an $11^-$-edge incident with two 3-faces. 
    \item[(b)]  The edge $e$ is a $10^-$- edge incident with a 3-face. 
    \item[(c)]  The edge $e$ is a $9^-$- edge. 
    \item[(d)]  $d(v)\in \{9,10\}$  and  $I$ contains  either
    \begin{itemize}
        \item[(i)] at least $\left\lfloor\frac{d(v)}{2}\right\rfloor - 1$ vertices of degree $3$ and an additional vertex of degree at most $5$ (for example, in Figure \ref{nineconfig} $(d)$, $d(v)=10$, so $d(u_i)=3$ for $i=1,3,5,7$, and $d(u_9)\leq 5$), or 
        \item[(ii)] at least $\left\lfloor\frac{d(v)}{2}\right\rfloor - 2$ vertices of degree $3$ and two additional vertices of degree at most $4$ (for example, in Figure \ref{nineconfig} $(d)$, $d(v)=10$, so $d(u_i)=3$ for $i=1,3,5$, and $d(u_i)\leq 4$ for $i=7$ and $9$).
.
    \end{itemize} 
    \item[(e)]  $d(v)=9$ and 
case (i) or (ii) holds.
\begin{itemize} 
\item[(i)] The vertex $v$ is incident with exactly one $4^+$-face and $I$ contains either
\begin{itemize}
\item[($\alpha$)]   four vertices $u_i$ for $i  \in \{2,3,\dots,8\}$ with three of the vertices of degree three and an additional vertex of degree at most five ((for example, in Figure \ref{nineconfig} $(e)$, $d(u_i)=3$ for $i=2,4,6$ and $d(u_8)\leq 5$), or
\item[($\beta$)]  
the vertex $u_1$ of degree three, $u_9$ of degree at most seven 
 (or interchange $u_1$ and $u_9$ above), and  $d(u_i)=3$ for $i=3,5,7 $, or

\item[($\delta$)]    $u_1$, $u_8$, and two $u_i$ for $i\in \{3,4,\dots, 6\}$ all of degree three   (or $u_2$, $u_9$, and two $u_i$ for $i\in \{4,5,6,7\}$ all of degree three).
\end{itemize}
   
 \item[(ii)] The vertex $v$ is incident seven $3$-faces and two $4^+$-faces with edge 
 $(v,u_9)$ on the two $4^+-$faces, 
    and $I$ contains  the vertex $u_1$ (or $u_8$) of degree three, three vertices $u_i$ for $i\in \{3,\dots, 7\}$ (or $i\in \{2,\dots, 6\}$ ) of degree three  and $u_9$ of degree at most six.

    
\end{itemize}
        \item[(f)] $d(v)=8$ and $v$ is 
        incident with six 3-faces and two $4^+$-faces as shown in Figure \ref{nineconfig}, $(f)$, such that $I$ contains vertices $u_i$ for $i=1,2,4$, and $5$ of degree three, and vertex $u_7$ of degree at most five
        (or take a counterclockwise traverse of Figure~\ref{nineconfig} (f)  obtained by applying the  permutation $(12)(38)(47)(56)$ to the indices above)
           
    \item[(g)] $d(v)=8$ and $v$ is incident with five 3-faces and three 
    $4^+$-faces as shown in Figure \ref{nineconfig} $(g)$, and I contains vertices $u_i$ for  $i=2,4,5,7$, and $8$ of degree three.
     
    \item[(h)] $d(v)=10$ and $v$ is incident with nine $3$-faces, one $4^+$-face (Figure \ref{nineconfig} $(h)$), and $I$ contains five vertices of degree three consisting of four $u_i$ for  $i\in \{2,3,\dots,9\}$, and either $u_1$ or $u_{10}$ ($d(u_i)=3$ for $i=1,3,5,7,9$ or $i=2,4,6,8,10$).    
\end{itemize}
\end{theorem}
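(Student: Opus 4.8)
The plan is to argue by discharging. I would suppose, for contradiction, that $G$ is a planar graph with $\delta(G)\ge 3$ containing none of the configurations (a)--(h); we may assume $G$ is connected, since a component of $G$ also has minimum degree at least $3$ and neighbour degrees are unchanged in it, and I would fix a plane embedding. Writing $\sigma(xy)=d(x)+d(y)$, the absence of (c), (b), and (a) says respectively that $\sigma(xy)\ge 10$ for every edge $xy$, that $\sigma(xy)\ge 11$ if $xy$ lies on a $3$-face, and that $\sigma(xy)\ge 12$ if $xy$ lies on two $3$-faces. From this I would extract the structural facts to be used repeatedly: every neighbour of a $3$-vertex has degree $\ge 7$, every neighbour of a $4$-vertex has degree $\ge 6$, every neighbour of a $5$-vertex has degree $\ge 5$; a $3$-vertex on a $3$-face is joined along the two edges of that face to $8^+$-vertices, and a $3$-vertex on two consecutive $3$-faces is joined across the shared edge to a $9^+$-vertex; any two $3$-vertices have $\sigma=6<10$, so the set of all $3$-vertices of $G$ is independent and the independence required in (d)--(h) is automatic; and, for any vertex $v$, if two cyclically consecutive neighbours of $v$ both have degree $3$ then the face between the corresponding edges at $v$ is a $4^+$-face (else that face would be a triangle on an edge of degree sum $6$), so the $3$-neighbours of $v$ must be ``spread out'' among its incident $4^+$-faces.

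Next I would set up the discharging. Assign each vertex $v$ the charge $\mu(v)=d(v)-6$ and each face $f$ the charge $\mu(f)=2d(f)-6$, so that Euler's formula gives $\sum_v\mu(v)+\sum_f\mu(f)=-12$; since $\delta(G)\ge 3$ every face has $\mu(f)\ge 0$, with equality only for $3$-faces, and the vertices of negative charge are precisely those of degree $3$, $4$, or $5$. The rules would move charge into these light vertices from their incident $4^+$-faces and their adjacent $7^+$-vertices, the amounts tuned to the degrees involved and to whether a transfer crosses a $3$-face or a larger face (here the $\sigma$-bounds help, e.g.\ a $7$-vertex is never joined to a $3$-vertex across a $3$-face since $7+3<11$, so it owes such neighbours nothing). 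The routine part is to check that, after discharging, every $4^+$-face and every vertex of degree at most $7$ or at least $11$ has nonnegative charge; the transfer rate would be calibrated precisely so that a vertex of degree $d\ge 11$ sheds at most $d-6$ in total --- the ``spread-out'' observation above being what caps its outgoing charge --- which is why the hubs in (d)--(h) have degree only $8$, $9$, or $10$.

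Since the total charge is $-12<0$, some vertex $v$ must end with negative charge, and by the previous step $d(v)\in\{8,9,10\}$; the principal task is then to show $v$ realizes one of (d)--(h). I would record the cyclic sequence of faces incident with $v$ --- in the surviving cases only one, two, or three of them are $4^+$-faces, since more would supply enough charge to $v$ or to its light neighbours --- and then, for each placement of those $4^+$-faces, apply the $\sigma$-bounds to the neighbours flanking each $3$-face and lying on the $4^+$-faces to read off how many neighbours $u_i$ are forced to have degree $3$ (or, in the few ``extra'' positions, degree at most $4$ or $5$) and where they lie. Matching this finite family of face-and-degree patterns against Figure~\ref{nineconfig} should give exactly cases (d)(i)--(ii), (e)(i)($\alpha$),($\beta$),($\delta$), (e)(ii), (f), (g), and (h). I expect this enumeration --- coordinating, for each of the degree-$8$, $9$, and $10$ hubs, the positions of the $4^+$-faces around $v$ with the neighbour-degree lower bounds they force --- to be the only real obstacle, the remainder being discharging bookkeeping.
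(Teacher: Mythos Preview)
Your plan is correct and follows essentially the same route as the paper: the paper also assumes (a)--(h) all fail, assigns charges $d(v)-6$ and $2d(f)-6$, and discharges from $4^+$-faces and high-degree vertices into $3$-, $4$-, $5$-vertices, then shows every vertex and face ends nonnegative, contradicting the total of $-12$; the heart of the argument is exactly the degree-$8$, $9$, $10$ hub analysis you single out. One caution: the ``bookkeeping'' is less routine than you suggest, as the paper's rule (R3) makes the $\tfrac12$ transfer to a $3$-vertex on a $3$-face/$4^+$-face edge conditional on the local structure around its degree-$9$ neighbour (sometimes pulling the charge from the $4^+$-face instead), and this extra twist is what makes both the degree-$9$ case and the $4$-face nonnegativity go through simultaneously.
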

   \begin{center}
   \begin{figure}[h]

\begin{tikzpicture}[scale=0.6]

    \node[circle,scale=0.5, draw=black, fill=black, minimum size=0.1mm](1) at (0,0){};
\node[below of=1,yshift=28,xshift= -.5,label=:\small{$v$}]{};
    \path (1) ++(0:2) node[circle,scale=0.5,draw=black, fill=black, minimum size=0.1mm, label=right:\small{$u_1$}](2) {};
    \path (1) ++(36:2) node[circle,scale=0.5,draw=black, fill=black, label=right:\small{$u_{10}$}](3){}; 
    \path (1) ++(72:2) node[circle,scale=0.5,draw=black, fill=black](4){}; 
     \node[below of=4,yshift=28,xshift= 5,label=:\small{$u_9$}]{};
    \path (1) ++(108:2) node[circle,scale=0.5,draw=black, fill=black, label=above:\small{$u_8$}](5){}; 
   
    \path (1) ++(144:2) node[circle,scale=0.5,draw=black, fill=black, label=left:\small{$u_7$}](6){}; 
    \path (1) ++(180:2) node[circle,scale=0.5,draw=black, fill=black, label=left:\small{$u_6$}](7){}; 
    \path (1) ++(216:2) node[circle,scale=0.5,draw=black, fill=black, label=left:\small{$u_5$}](8){};
    \path (1) ++(252:2) node[circle,scale=0.5,draw=black, fill=black, label=below:\small{$u_4$}](9){};

    \path (4) ++(108:2) node[circle,scale=0.5,draw=black, fill=black, label=left:\small{$x$}](12){}; 
    \path (4) ++(36:2) node[circle,scale=0.5,draw=black, fill=black, label=right:\small{$y$}](13){}; 

\node[below of=9,label=below:$(d)$,xshift=10]{};
    
    \path (1) ++(288:2) node[circle,scale=0.5,draw=black, fill=black, label=below:\small{$u_3$}](10){};
    \path (1) ++(324:2) node[circle,scale=0.5,draw=black, fill=black, label=right:\small{$u_2$}](11){};

    \path [-,ultra thin] (1) edge node[left] {} (2);
    \path [-,ultra thin] (1) edge node[left] {} (3);
    \path [-,ultra thin] (1) edge node[left] {} (4);
    \path [-,ultra thin] (1) edge node[left] {} (5);
    \path [-,ultra thin] (1) edge node[left] {} (6);
    \path [-,ultra thin] (1) edge node[left] {} (7);
    \path [-,ultra thin] (1) edge node[left] {} (8);
     \path [-,ultra thin] (1) edge node[left] {} (9);
     \path [-,ultra thin] (1) edge node[left] {} (10);
     \path [-,ultra thin] (1) edge node[left] {} (11);

    \path [-,ultra thin] (3) edge node[left] {} (4);
     \path [-,ultra thin] (4) edge node[left] {} (5);
    \path [-,ultra thin] (5) edge node[left] {} (6);
    \path [-,ultra thin] (6) edge node[left] {} (7);
     \path [-,ultra thin] (7) edge node[left] {} (8);
     \path [-,ultra thin] (8) edge node[left] {} (9);
     \path [-,ultra thin] (9) edge node[left] {} (10);
     \path [-,ultra thin] (10) edge node[left] {} (11);
     \path [-,ultra thin] (2) edge node[left] {} (11);
    \path [-,ultra thin] (2) edge node[left] {} (3);

    \path [-,ultra thin] (4) edge node[left] {} (12);
    \path [-,ultra thin] (4) edge node[left] {} (13);

    \begin{scope}[xshift=270]

    \node[circle, draw=black, fill=black, minimum size=0.1mm, scale=0.5](1) at (0,0){};
    \node[below of=1,yshift=28,xshift= -2,label=:\small{$v$}]{};
   
    \path (1) ++(-1:2) node[circle,draw=black, fill=black, minimum size=0.1mm, label=below right:\small{$u_1$},scale=0.5](2) {};

\node[right of=1,yshift=-5.5,label=:\tiny{$f_1$}]{};
    
     \path (1) ++(-3:2.5) node[circle,scale=0.5,xshift=25](20) {};
    \path (1) ++(37:2) node[circle,draw=black,scale=0.5, fill=black, label=above:\small{$u_9$}](3){}; 
    \path (1) ++(77:2) node[circle,draw=black, fill=black,scale=0.5](4){}; 
\node[below of=4,yshift=28,xshift= 5,label=:\small{$u_8$}]{};

  \path (4) ++(108:2) node[circle,scale=0.5,draw=black, fill=black, label=left:\small{$x$}](15){}; 
    \path (4) ++(36:2) node[circle,scale=0.5,draw=black, fill=black, label=right:\small{$y$}](14){};

    \path (1) ++(117:2) node[circle,draw=black, fill=black,scale=0.5, label=left:\small{$u_7$}](5){}; 
    \path (1) ++(157:2) node[circle,draw=black, fill=black,scale=0.5, label=left:\small{$u_6$}](6){}; 
    \path (1) ++(197:2) node[circle,draw=black, fill=black,scale=0.5, label=left:\small{$u_5$}](7){}; 
    \path (1) ++(237:2) node[circle,draw=black, fill=black,scale=0.5, label=left:\small{$u_4$}](8){};
    \path (1) ++(277:2) node[circle,draw=black, fill=black,scale=0.5, label=below:\small{$u_3$}](9){};
    \node[below of=9,label=below:$(e(i))$]{};
    \path (1) ++(317:2) node[circle,scale=0.5,draw=black, fill=black, label=below:\small{$u_2$}](10) {};
    \path (1) ++(17:3.8) node[circle,scale=0.5,draw=black, fill=black](11) {};

    \path [-,ultra thin] (1) edge node[left] {} (2);
    \path [-,ultra thin] (1) edge node[left] {} (3);


    \path [-,ultra thin] (1) edge node[left] {} (4);
    \path [-,ultra thin] (1) edge node[left] {} (5);
    \path [-,ultra thin] (1) edge node[left] {} (6);
    \path [-,ultra thin] (1) edge node[left] {} (7);
    \path [-,ultra thin] (1) edge node[left] {} (8);
    \path [-,ultra thin] (1) edge node[left] {} (9);
    \path [-,ultra thin] (1) edge node[left] {} (10);
    \path [-,ultra thin] (2) edge node[left] {} (11);
    \path [-,ultra thin,dashed] (3) edge node[left] {} (11);
    \path [-,ultra thin] (3) edge node[left] {} (4);
    \path [ultra thin] (4) edge  node[left] {} (5);
    \path [-,ultra thin] (5) edge node[left] {} (6);
    \path [-,ultra thin] (6) edge node[left] {} (7);
    \path [-,ultra thin] (7) edge node[left] {} (8);
    \path [-,ultra thin] (8) edge node[left] {} (9);
    \path [-,ultra thin] (9) edge node[left] {} (10);
    \path [-,ultra thin] (10) edge node[left] {} (2);
    \path [-,ultra thin] (4) edge node[left] {} (14);
    \path [-,ultra thin] (4) edge node[left] {} (15);

\end{scope}

\begin{scope}[xshift=510]

  \node[circle, draw=black, fill=black, minimum size=0.1mm,scale=0.5](1) at (0,0){};
\node[below of=1,yshift=28,xshift= -2.5,label=:\small{$v$}]{};
    \path (1) ++(0:2) node[circle,draw=black, fill=black,scale=0.5, label=below:\small{$u_9$}](2) {};
    \path (1) ++(40:2) node[circle,draw=black, fill=black,scale=0.5, label=above:\small{$u_8$}](3){}; 

\node[below of=2,yshift=6,xshift=-12,label=:\tiny{$f_1$}]{};

\node[below of=3,yshift=2,xshift= -6,label=:\tiny{$f_2$}]{};

    \path (1) ++(80:2) node[circle,draw=black, fill=black,scale=0.5, label=above:\small{$u_7$}](4){}; 
    \path (1) ++(120:2) node[circle,draw=black, fill=black,scale=0.5, label=left:\small{$u_6$}](5){}; 
    \path (1) ++(160:2) node[circle,draw=black, fill=black,scale=0.5, label=left:\small{$u_5$}](6){}; 
    \path (1) ++(200:2) node[circle,draw=black, fill=black,scale=0.5, label=left:\small{$u_4$}](7){}; 
    \path (1) ++(240:2) node[circle,draw=black, fill=black,scale=0.5, label=left:\small{$u_3$}](8){};
    \path (1) ++(280:2) node[circle,draw=black, fill=black,scale=0.5, label=below:\small{$u_2$}](9){};
       \node[below of=9,label=below:$(e(ii))$]{};
    \path (1) ++(320:2) node[circle,draw=black, fill=black,scale=0.5, label=below:\small{$u_1$}](10) {};
    \path (1) ++(20:3.6) node[circle,draw=black, fill=black,scale=0.5, label = above :\small{$z$}](11) {};
    \path (2) ++(320:1.8) node[circle,draw=black, fill=black,scale=0.5, label = below :\small{$y$}](14){} ;

     \path (2) ++(20:1.8) node[circle,draw=black, fill=black,scale=0.5, label = right :\small{$x_1$}](15){} ;
        \path (2) ++(0:1.8) node[circle,draw=black, fill=black,scale=0.5, label = right :\small{$x_2$}](16){} ;
    \path (2) ++(-20:1.8) node[circle,draw=black, fill=black,scale=0.5, label = right :\small{$x_3$}](17){} ;
       

   \path [-,ultra thin] (1) edge node[left] {} (2);
    \path [-,ultra thin] (1) edge node[left] {} (3);


    \path [-,ultra thin] (1) edge node[left] {} (4);
    \path [-,ultra thin] (1) edge node[left] {} (5);
    \path [-,ultra thin] (1) edge node[left] {} (6);
    \path [-,ultra thin] (1) edge node[left] {} (7);
    \path [-,ultra thin] (1) edge node[left] {} (8);
    \path [-,ultra thin] (1) edge node[left] {} (9);
    \path [-,ultra thin] (1) edge node[left] {} (10);
     \path [-,ultra thin] (2) edge node[left] {} (11);
   \path [-,ultra thin,dashed] (3) edge node[left] {} (11);
    \path [-,ultra thin] (3) edge node[left] {} (4);
    \path [-,ultra thin] (4) edge node[left] {} (5);
    \path [-,ultra thin] (4) edge node[left] {} (5);
    \path [-,ultra thin] (5) edge node[left] {} (6);
    \path [-,ultra thin] (6) edge node[left] {} (7);
    \path [-,ultra thin] (7) edge node[left] {} (8);
    \path [-,ultra thin] (8) edge node[left] {} (9);
    \path [-,ultra thin] (9) edge node[left] {} (10);
    
    \path [-,ultra thin] (2) edge node[left] {} (14);
    \path [-,ultra thin, dashed] (10) edge node[left] {} (14);
    \path [-,ultra thin] (2) edge node[left] {} (15);
    \path [-,ultra thin] (2) edge node[left] {} (16);
   \path [-,ultra thin] (2) edge node[left] {} (17);

\end{scope}

\begin{scope}[yshift=-300]

    \node[circle, draw=black,scale=0.5, fill=black, minimum size=0.1mm](1) at (0,0){};
    \node[below of=1,yshift=28,xshift= -5.5,label=:\small{$v$}]{};
    \path (1) ++(0:2) node[circle,scale=0.5,draw=black, fill=black, minimum size=0.1mm, label=right:\small{$u_1$}](2) {};

\node[below of=2,yshift=4,xshift=-10,label=:\tiny{$f_1$}]{};

    \path (1) ++(45:2) node[circle,scale=0.5,draw=black, fill=black, label=above:\small{$u_8$}](3){}; 
    \path (1) ++(90:2) node[circle,scale=0.5,draw=black, fill=black](4){}; 
    \node[below of=4,yshift=28,xshift= 0,label=:\small{$u_7$}]{};
    \path (1) ++(135:2) node[circle,scale=0.5,draw=black, fill=black, label=above:\small{$u_6$}](5){}; 
    \path (1) ++(180:2) node[circle,scale=0.5,draw=black, fill=black, label=left:\small{$u_5$}](6){}; 
    \path (1) ++(225:2) node[circle,scale=0.5,draw=black, fill=black, label=below:\small{$u_4$}](7){}; 

     \path (4) ++(120:2) node[circle,scale=0.5,draw=black, fill=black, label=left:\small{$x_1$}](12){}; 
     \path (4) ++(60:2) node[circle,scale=0.5,draw=black, fill=black, label=right:\small{$x_2$}](13){};

\node[above of=7,yshift=-29,xshift=0,label=:\tiny{$f_2$}]{};
    
    \path (1) ++(270:2) node[circle,scale=0.5,draw=black, fill=black, label=below:$u_3$](8){};
\node[below of=8,label=below:$(f)$]{};

    \path (1) ++(315:2) node[circle,scale=0.5,draw=black, fill=black, label=below:\small{$u_2$}](9){};
    \path (2) ++(-45:2) node[circle,scale=0.5,draw=black, fill=black, label= right:\small{$z$}](10){};
    \path (6) ++(-135:2) node[circle,scale=0.5,draw=black, fill=black, label=left:\small{$y$}](11){};

    \path [-,ultra thin] (1) edge node[left] {} (2);
    \path [-,ultra thin] (1) edge node[left] {} (3);
    \path [-,ultra thin] (1) edge node[left] {} (4);
    \path [-,ultra thin] (1) edge node[left] {} (5);
    \path [-,ultra thin] (1) edge node[left] {} (6);
    \path [-,ultra thin] (1) edge node[left] {} (7);
    \path [-,ultra thin] (1) edge node[left] {} (8);
     \path [-,ultra thin] (1) edge node[left] {} (9);

     \path [-,ultra thin] (9) edge node[left] {} (10);
     \path [-,ultra thin] (7) edge node[left] {} (11);
     \path [-,ultra thin, dashed] (6) edge node[left] {} (11);
     \path [-,ultra thin] (2) edge node[left] {} (3);
     \path [-,ultra thin] (3) edge node[left] {} (4);
     \path [-,ultra thin] (4) edge node[left] {} (5);
    \path [-,ultra thin] (5) edge node[left] {} (6);
     \path [-,ultra thin] (7) edge node[left] {} (8);
     \path [-,ultra thin] (8) edge node[left] {} (9);
     \path [-,ultra thin, dashed] (2) edge node[left] {} (10);
     \path [-,ultra thin] (4) edge node[left] {} (12);
     \path [-,ultra thin] (4) edge node[left] {} (13);

\end{scope}

\begin{scope}[xshift=270,yshift=-300]

    \node[circle,scale=0.5, draw=black, fill=black, minimum size=0.1mm](1) at (0,0){};
      \node[below of=1,yshift=28,xshift= -5.5,label=:\small{$v$}]{};
    \path (1) ++(0:2) node[circle,scale=0.5,draw=black, fill=black, minimum size=0.1mm, label=right:\small{$u_1$}](2) {};

\node[above of=2,yshift=-28,label=below:\tiny{$f_1$}]{};

    \path (1) ++(45:2) node[circle,scale=0.5,draw=black, fill=black, label=right:\small{$u_8$}](3){}; 
    \path (1) ++(90:2) node[circle,scale=0.5,draw=black, fill=black, label=above:\small{$u_7$}](4){};

\node[above of =4,yshift=-25,xshift=10,label=below:\tiny{$f_3$}]{};

    \path (1) ++(135:2) node[circle,scale=0.5,draw=black, fill=black, label=above:\small{$u_6$}](5){}; 
    \path (1) ++(180:2) node[circle,scale=0.5,draw=black, fill=black, label=left:\small{$u_5$}](6){}; 

\node[above of=6,yshift=-28,label=below:\tiny{$f_2$}]{};

    \path (1) ++(225:2) node[circle,scale=0.5,draw=black, fill=black, label=below:\small{$u_4$}](7){}; 
    \path (1) ++(270:2) node[circle,scale=0.5,draw=black, fill=black, label=below:\small{$u_3$}](8){};

\node[below of=8,label=below:$(g)$]{};

    \path (1) ++(315:2) node[circle,scale=0.5,draw=black, fill=black, label=below:\small{$u_2$}](9){};
    \path (2) ++(-45:2) node[circle,scale=0.5,draw=black, fill=black, label=right:\small{$x$}](10){};
    \path (6) ++(-135:2) node[circle,scale=0.5,draw=black, fill=black,  label=left:\small{$y$}](11){};
     \path (3) ++(90:2) node[circle,scale=0.5,draw=black, fill=black,  label=above:\small{$z$}](12){};

    \path [-,ultra thin] (1) edge node[left] {} (2);
    \path [-,ultra thin] (1) edge node[left] {} (3);
    \path [-,ultra thin] (1) edge node[left] {} (4);
    \path [-,ultra thin] (1) edge node[left] {} (5);
    \path [-,ultra thin] (1) edge node[left] {} (6);
    \path [-,ultra thin] (1) edge node[left] {} (7);
    \path [-,ultra thin] (1) edge node[left] {} (8);
     \path [-,ultra thin] (1) edge node[left] {} (9);

     \path [-,ultra thin] (9) edge node[left] {} (10);
     \path [-,ultra thin] (7) edge node[left] {} (11);
     \path [-,ultra thin,dashed] (6) edge node[left] {} (11);
     \path [-,ultra thin] (2) edge node[left] {} (3);
    
     \path [-,ultra thin] (4) edge node[left] {} (5);
    \path [-,ultra thin] (5) edge node[left] {} (6);
     \path [-,ultra thin] (7) edge node[left] {} (8);
     \path [-,ultra thin] (8) edge node[left] {} (9);
     \path [-,ultra thin,dashed] (2) edge node[left] {} (10);
     \path [-,ultra thin,dashed] (3) edge node[left] {} (12);
     \path [-,ultra thin] (4) edge node[left] {} (12);

\end{scope}

\begin{scope}[xshift=510,yshift=-300]

    \node[circle,scale=0.5, draw=black, fill=black, minimum size=0.1mm, label=above:\small{$v$}](1) at (0,0){};
    \path (1) ++(0:2) node[circle,scale=0.5,draw=black, fill=black, minimum size=0.1mm, label=right:\small{$u_1$}](2) {};
    \path (1) ++(36:2) node[circle,scale=0.5,draw=black, fill=black, label=above:\small{$u_{10}$}](3){}; 
    \path (1) ++(72:2) node[circle,scale=0.5,draw=black, fill=black, label=above:\small{$u_9$}](4){}; 
    \path (1) ++(108:2) node[circle,scale=0.5,draw=black, fill=black, label=above:\small{$u_8$}](5){}; 
    \path (1) ++(144:2) node[circle,scale=0.5,draw=black, fill=black, label=left:\small{$u_7$}](6){}; 
    \path (1) ++(180:2) node[circle,scale=0.5,draw=black, fill=black, label=left:\small{$u_6$}](7){}; 
    \path (1) ++(216:2) node[circle,scale=0.5,draw=black, fill=black, label=left:\small{$u_5$}](8){};
    \path (1) ++(252:2) node[circle,scale=0.5,draw=black, fill=black, label=below:\small{$u_4$}](9){};

\node[below of=9,label=below:$(h)$,xshift=10]{};
    
    \path (1) ++(288:2) node[circle,scale=0.5,draw=black, fill=black, label=below:\small{$u_3$}](10){};
    \path (1) ++(324:2) node[circle,scale=0.5,draw=black, fill=black, label=right:\small{$u_2$}](11){};

     \path (2) ++(36:2) node[circle,scale=0.5,draw=black, fill=black, label=right:\small{$x$}](12){};

    \path [-,ultra thin] (1) edge node[left] {} (2);
    \path [-,ultra thin] (1) edge node[left] {} (3);
    \path [-,ultra thin] (1) edge node[left] {} (4);
    \path [-,ultra thin] (1) edge node[left] {} (5);
    \path [-,ultra thin] (1) edge node[left] {} (6);
    \path [-,ultra thin] (1) edge node[left] {} (7);
    \path [-,ultra thin] (1) edge node[left] {} (8);
     \path [-,ultra thin] (1) edge node[left] {} (9);
     \path [-,ultra thin] (1) edge node[left] {} (10);
     \path [-,ultra thin] (1) edge node[left] {} (11);

    \path [-,ultra thin] (3) edge node[left] {} (4);
     \path [-,ultra thin] (4) edge node[left] {} (5);
    \path [-,ultra thin] (5) edge node[left] {} (6);
    \path [-,ultra thin] (6) edge node[left] {} (7);
     \path [-,ultra thin] (7) edge node[left] {} (8);
     \path [-,ultra thin] (8) edge node[left] {} (9);
     \path [-,ultra thin] (9) edge node[left] {} (10);
     \path [-,ultra thin] (10) edge node[left] {} (11);
     \path [-,ultra thin] (2) edge node[left] {} (11);

     \path [-,ultra thin] (2) edge node[left] {} (12);
     \path [-,ultra thin,dashed] (3) edge node[left] {} (12);

\end{scope}

   \end{tikzpicture}
   \caption{Some configurations in a  planar graph with minimum degree three (dashed links are paths of length at least one)}
   \label{nineconfig}
   \end{figure}
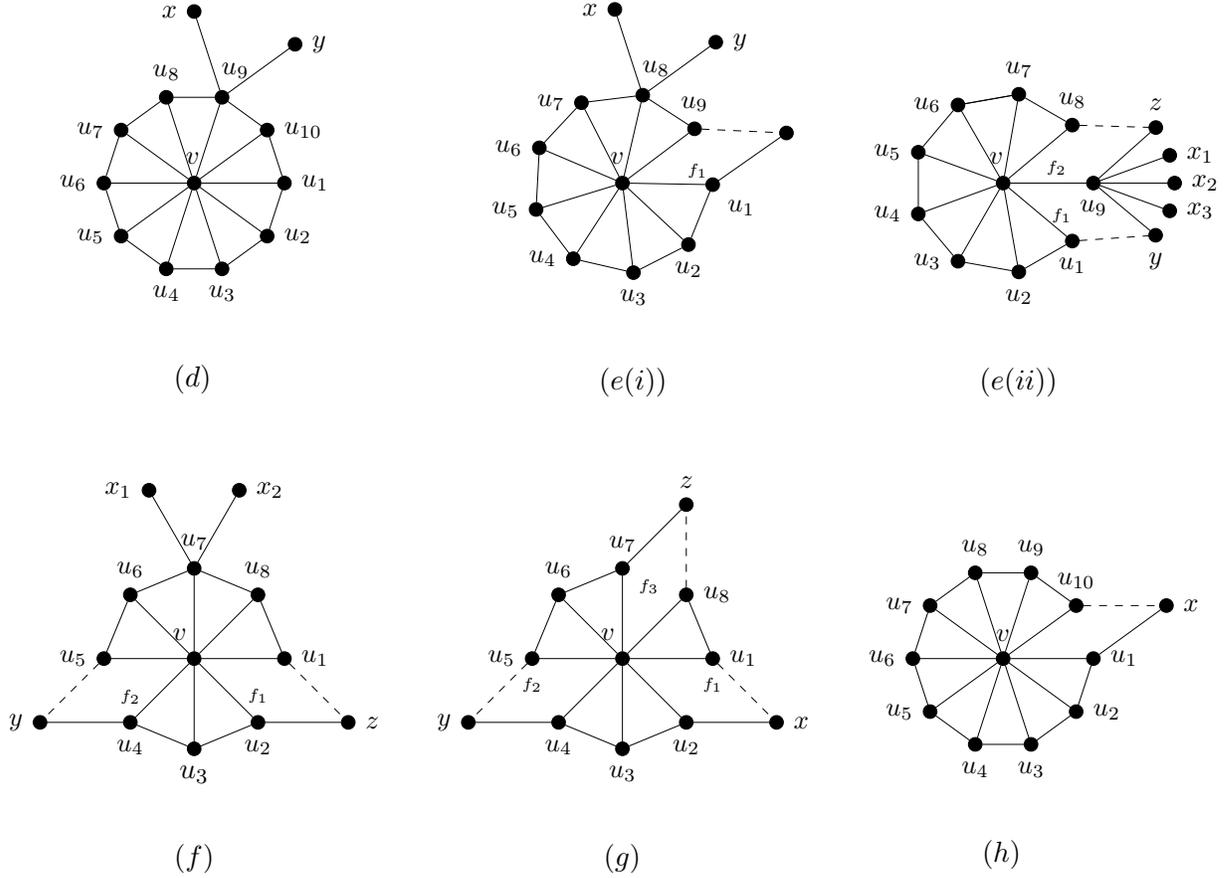
\end{center}

The motivating results for this paper include Hartnell and Rall's \cite{hartnell1994bounds} upper bound for the bondage number with respect to the degree sum of adjacent vertices. In 2000, Kang and Yuan \cite{kang2000bondage} proved that $b(G) \leq \min\{8, \Delta(G) + 2\}$ and $b(G) \leq 3$ for a graph $G$ with forbidden minor $K_4$. They also determined $b(G) \leq 7$ for a connected planar graph $G$ without vertices of degree 5. Fischermann et al. \cite{fischermann2003remarks} established that $b(G) \leq \Delta(G) + 1$ for connected planar graphs with $g(G) \geq 4$ and $\Delta(G) \geq 5$, and for all non-3-regular graphs with $g(G) \geq 5$.

Regarding the independent bondage number, there are fewer results compared to the bondage number. In 2018, Priddy, Wang, and Wei \cite{priddy2019independent} determined the independent bondage number for several graph classes and provided upper bounds using degree sums and maximum degree, as summarized in the following theorems:

\begin{theorem}\cite{priddy2019independent}  
\label{PriddyWei}
If $G$ is a non-empty graph, then
\[b_i(G) \leq \min\{d(u) + d(v) - |N(u) \cap N(v)| - 1: uv \in E(G)\}.\]
\end{theorem}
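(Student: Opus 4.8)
The plan is to adapt to the independent setting the classical edge-deletion argument for the ordinary bondage number. It is enough to prove that $b_i(G)\le d(u)+d(v)-|N(u)\cap N(v)|-1$ holds for \emph{every} edge $uv$ of $G$, since taking the minimum over edges then yields the stated bound; so fix an edge $uv$ and write $C=N(u)\cap N(v)$. First I would delete the edge set
\[
B=\{\,vw : w\in N(v)\,\}\ \cup\ \{\,uw : w\in N(u)\setminus N[v]\,\},
\]
which has size $d(v)+\bigl(d(u)-1-|C|\bigr)=d(u)+d(v)-|C|-1$; in $G-B$ the vertex $v$ is isolated and $N_{G-B}(u)=C$. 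Since every independent dominating set must contain each isolated vertex, $\gamma^i(G-B)=1+\gamma^i(H)$ with $H:=(G-B)-v$, so it suffices to establish $\gamma^i(H)\ge\gamma^i(G)$: then $\gamma^i(G-B)=1+\gamma^i(H)\ge 1+\gamma^i(G)>\gamma^i(G)$, so $B$ is an independent bondage set and $b_i(G)\le|B|$.

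To prove $\gamma^i(H)\ge\gamma^i(G)$ I would take a minimum independent dominating set $D$ of $H$ and build from it an independent dominating set of $G$ of size at most $|D|$. The key points are that in $H$ the vertex $u$ has neighbourhood exactly $C$, and that $H$ and $G$ differ only in the vertex $v$ and in the edges from $u$ to $N(u)\setminus N[v]$, all of which are incident with $u$ or $v$. If $u\notin D$, then $D$ dominates $u$ in $H$ only through some $c\in D\cap C$; this $D$ is still independent in $G$ (it avoids $u$ and $v$), it dominates $V(G)\setminus\{v\}$ because $H$ is a spanning subgraph of $G$ on that vertex set, and it dominates $v$ via $c\in C\subseteq N_G(v)$, so $D$ already works. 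If $u\in D$, then $D\cap C=\varnothing$ and $D$ dominates $v$ through $u$; if moreover $D$ is disjoint from $N(u)\setminus N[v]$ then $D$ is again independent in $G$, and we are done.

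The one remaining case, $u\in D$ with $D\cap(N(u)\setminus N[v])\neq\varnothing$, is the real obstacle: here $D$ is \emph{not} independent in $G$, and because $\gamma^i$ is not monotone under edge deletion there is no subgraph inequality to fall back on, so $D$ must be repaired by a size-preserving modification. When $D$ meets no vertex of $N(v)\setminus N[u]$, deleting $u$ and inserting $v$ works cleanly — this removes every new edge of $G$ (each is incident with $u$ or with the isolated $v$), while $v$ simultaneously re-dominates $u$ and every common neighbour of $u$ and $v$. When $D$ does meet $N(v)\setminus N[u]$, the vertex $v$ is already dominated but cannot be inserted, and one must instead delete $u$, re-dominate it through a retained vertex of $D\cap(N(u)\setminus N[v])$, and argue separately about the common neighbours of $u$ and $v$ that were dominated in $H$ only by $u$; organising this configuration by which vertices of $N_H[u]$ are privately dominated by $u$ is where essentially all of the casework lives, and is the step I expect to be the main difficulty. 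Verifying that in every sub-case the modified set is independent and dominating in $G$ with size at most $|D|$ completes the proof of $\gamma^i(H)\ge\gamma^i(G)$, and hence of the theorem.
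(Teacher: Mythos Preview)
The paper does not contain a proof of this theorem: it is quoted from \cite{priddy2019independent} and used as a black box, so there is no argument in the present paper to compare yours against.

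That said, your proposal is not a complete proof. You set up the edge set $B$ correctly, reduce to showing $\gamma^i(H)\ge\gamma^i(G)$ for $H=(G-B)-v$, and dispatch the cases $u\notin D$ and $u\in D$ with $D\cap\bigl(N(u)\setminus N[v]\bigr)=\varnothing$ cleanly. In the remaining case you yourself flag the difficulty and stop: when $u\in D$, $D$ meets both $N(u)\setminus N[v]$ and $N(v)\setminus N[u]$, and several common neighbours $c\in C$ are privately dominated by $u$ in $H$, your suggested repair ``delete $u$, re-dominate it through a retained vertex of $D\cap(N(u)\setminus N[v])$'' does not by itself recover domination of those $c$'s, and you give no mechanism that does. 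Concretely, take $V(G)=\{u,v,c_1,c_2,x,y\}$ with $E(G)=\{uv,uc_1,uc_2,vc_1,vc_2,ux,vy\}$; then $H$ has edges $\{uc_1,uc_2\}$ only, $D=\{u,x,y\}$ is a minimum independent dominating set of $H$, and $P=\{c_1,c_2\}$ are both private to $u$. Removing $u$ and adding any single vertex of $P$ leaves the other undominated, while adding $v$ destroys independence with $y$; so the modification you describe fails here even though $\gamma^i(G)=2<3=|D|$ (for instance $\{u,y\}$ works in $G$). The point is that in this sub-case the right move may be to \emph{keep} $u$ and discard the offending vertices of $D\cap(N(u)\setminus N[v])$, which are dominated in $G$ by $u$; but then their private neighbourhoods in $H$ must be handled, and you have not addressed this. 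Until that sub-case is actually carried out, the argument is a sketch rather than a proof.
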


\begin{theorem}\cite{priddy2019independent}
If $G$ is a connected planar graph, then 
\[b_i(G) \leq \Delta(G) + 2.\]
\end{theorem}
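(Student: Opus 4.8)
The plan is to combine Theorem~\ref{PriddyWei} with a one‑rule discharging argument. We may assume $G$ has an edge. Suppose for contradiction that $G$ is connected and planar and $b_i(G)\ge\Delta(G)+3$; fix a plane embedding of $G$. By Theorem~\ref{PriddyWei}, $b_i(G)$ is at most $d(u)+d(v)-|N(u)\cap N(v)|-1$ for \emph{every} edge $uv$, so every edge $uv$ of $G$ satisfies
\[ d(u)+d(v)-|N(u)\cap N(v)|\ \ge\ \Delta(G)+4 . \]

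First I would read off the local structure this forces. Substituting $d(v)\le\Delta(G)$ for one endpoint, every edge $uv$ gives $d(u)-|N(u)\cap N(v)|\ge 4$. Taking $v$ to be an arbitrary neighbour of $u$, this yields: $\delta(G)\ge 4$; if $d(u)=4$ then $|N(u)\cap N(v)|=0$ for every neighbour $v$, so $u$ lies on no triangle; and if $d(u)=5$ then $|N(u)\cap N(v)|\le 1$ for every neighbour $v$, so the subgraph induced on $N(u)$ has maximum degree at most one, whence $u$ lies on at most two triangles. Since a $3$-face is bounded by a triangle, two of whose vertices are adjacent neighbours of the third, no vertex incident with a $3$-face can have degree $4$; thus every vertex incident with a $3$-face has degree at least $5$ (hence charge at least $1$ below), and each degree‑$5$ vertex is incident with at most two $3$-faces.

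Now discharge. Assign to every vertex $v$ the charge $d(v)-4$ and to every face $f$ the charge $d(f)-4$. Using $\sum_{v}d(v)=\sum_{f}d(f)=2|E(G)|$ and Euler's formula $|V(G)|-|E(G)|+|F(G)|=2$, the total charge is $4\big(|E(G)|-|V(G)|-|F(G)|\big)=-8$. The only elements with negative initial charge are the $3$-faces (charge $-1$ each). Apply the single rule: each $3$-face receives $\tfrac13$ from each of its three incident vertices. Every $3$-face then ends with charge $0$. A vertex of degree $4$ is incident with no $3$-face, so it ends with $0$; a vertex of degree $5$ gives away at most $\tfrac23$ and ends with at least $\tfrac13$; a vertex of degree $d\ge 6$ gives away at most $\tfrac d3$ and ends with at least $\tfrac{2d}{3}-4\ge 0$; and every face of degree at least $4$ keeps its nonnegative charge. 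Hence every vertex and every face has nonnegative final charge, so the total is nonnegative, contradicting $-8$. Therefore $b_i(G)\le\Delta(G)+2$.

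The proof is short, and the one subtle point is the translation step: one must see that the hypothesis $b_i(G)>\Delta(G)+2$, fed through Theorem~\ref{PriddyWei}, is exactly strong enough to force the three structural facts ($\delta(G)\ge 4$, no degree‑$4$ vertex on a triangle, each degree‑$5$ vertex on at most two triangles), which in turn make the single discharging rule balance precisely — the tight cases being a degree‑$6$ vertex surrounded entirely by $3$-faces and a degree‑$5$ vertex lying on two $3$-faces. One should also dispose of the degenerate edgeless graph separately and note that $\sum_f d(f)=2|E(G)|$ (with bridges counted twice) holds for any connected plane graph, which is what validates the Euler count of $-8$.
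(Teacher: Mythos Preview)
The paper does not give its own proof of this theorem; it is simply quoted from \cite{priddy2019independent} as a known result, so there is nothing in the present paper to compare against.

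That said, your argument is correct and self-contained. The key translation step is right: assuming $b_i(G)\ge\Delta(G)+3$ and applying Theorem~\ref{PriddyWei} to an arbitrary edge $uv$ gives $d(u)+d(v)-|N(u)\cap N(v)|\ge\Delta(G)+4$, and substituting $d(v)\le\Delta(G)$ yields $d(u)-|N(u)\cap N(v)|\ge4$. From this the three structural facts follow exactly as you say ($\delta(G)\ge4$; a $4$-vertex lies on no triangle; a $5$-vertex lies on at most two triangles, since the bound $|N(u)\cap N(v)|\le1$ for every neighbour $v$ forces the graph induced on $N(u)$ to be a matching). The balanced charging with the single rule ``each $3$-face takes $\tfrac13$ from each incident vertex'' then balances precisely: $3$-faces end at $0$; $4$-vertices stay at $0$; $5$-vertices end at $\ge\tfrac13$; $d$-vertices with $d\ge6$ end at $\ge\tfrac{2d}{3}-4\ge0$; and $4^+$-faces are untouched. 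The contradiction with total charge $-8$ is clean. Your remarks about bridges being counted twice in $\sum_f d(f)$ and about disposing of the edgeless case are the right hygiene, though for a connected graph the latter is just the single-vertex graph where $b_i$ is undefined.
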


In 2021, Pham and Wei \cite{phamindependent} determined a constant upper bound for the independent bondage number of a planar graph with $\delta(G) \geq 3$.

\begin{theorem}\cite{phamindependent}
\label{phamindependen}
Let $G$ be a planar graph with $\delta(G) \geq 3$, then
\[b_i(G) \leq 9.\]
\end{theorem}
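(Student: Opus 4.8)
The plan is to follow the same two-layer strategy as the proof of the bound $8$ in this paper, but with a coarser list of unavoidable configurations. By Theorem~\ref{PriddyWei} it is enough to show that a planar graph $G$ with $\delta(G)\ge 3$ always contains either a \emph{light edge} $uv$ — meaning one with $d(u)+d(v)-|N(u)\cap N(v)|\le 10$ — or a vertex $v$ together with an independent set $I\subseteq N(v)$ of low-degree neighbours for which one can write down an explicit set $B\subseteq E(G)$ with $|B|\le 9$ and $\gamma^i(G-B)>\gamma^i(G)$. So the first job is an ``unavoidable set'' theorem (the analogue of Theorem~\ref{maintheorem} with weaker numerics) and the second is a ``reducibility'' argument (the analogue of Theorem~\ref{IBNconfigurationproof}) showing that each listed configuration actually forces $b_i(G)\le 9$.

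For the structure theorem I would use discharging. Assume $G$ contains no light edge and no usable configuration, and assign to each vertex $x$ the charge $d(x)-6$ and to each face $f$ the charge $2d(f)-6$; by Euler's formula the total charge is $-12$. The hypothesis ``no light edge'' says $d(u)+d(v)-|N(u)\cap N(v)|\ge 11$ for every edge $uv$, which is very restrictive: a $3$-vertex has all its neighbours of degree $\ge 8$ (and of degree $\ge 9$ or $\ge 10$ when the joining edge lies on one or two triangles), a $4$-vertex has all neighbours of degree $\ge 7$, a $5$-vertex has all neighbours of degree $\ge 6$, and any edge between two $6$-vertices lies on a triangle. Consequently the only objects of negative charge are the vertices of degree $3,4,5$, triangular faces are neutral, $7^{+}$-vertices have charge $\ge 1$, and $8^{+}$-vertices and $4^{+}$-faces have charge $\ge 2$. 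I would then let $8^{+}$-vertices and $4^{+}$-faces send prescribed amounts of charge to nearby $3$-, $4$-, and $5$-vertices — using the degree bounds above together with the cyclic pattern of triangles and large faces around each low vertex — so as to make every final charge nonnegative, contradicting the total $-12$; the place where some local inequality fails is exactly where one of the configurations appears.

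For the reducibility step, given $v$ and an independent set $I=\{u_{i_1},\dots,u_{i_k}\}\subseteq N(v)$ of (mostly) degree-$3$ neighbours, I would take $B$ to be the edges $vu_{i_1},\dots,vu_{i_k}$ together with at most one further incident edge at each $u_{i_j}$, so $|B|\le 2k$, and argue by comparing independent dominating sets of $G$ and of $G-B$: after these deletions $v$ can only be dominated by itself or by a fixed small set, while each $u_{i_j}$ loses its $v$-edge, and any independent dominating set of $G-B$ no larger than $\gamma^i(G)$ can be rewritten as a strictly smaller one of $G$ — essentially because in $G$ a single vertex (namely $v$, or one vertex of $I$) can cover several of $v,u_{i_1},\dots,u_{i_k}$ at once, a saving no longer available in $G-B$. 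The degree restrictions on $I$ and the counts in the analogue of cases (d)--(h) (about $\lfloor d(v)/2\rfloor-1$ low neighbours) force $k\le 4$, keeping $|B|\le 9$; the light-edge cases are immediate from Theorem~\ref{PriddyWei}. This also makes transparent why getting down to $8$ needs the more delicate, face-aware configurations of Theorem~\ref{maintheorem} rather than just a slightly lighter edge.

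The main obstacle will be the discharging, not the reducibility. The troublesome cases are low-degree vertices incident with many triangular faces — where there are few large faces around to help, so the needed charge must be extracted entirely from the (necessarily high-degree) neighbours — and vertices of degree $6$ or $7$, which have almost no surplus yet can be adjacent to several needy vertices; eliminating these requires a careful case analysis of the possible cyclic arrangements of faces around each $3$-, $4$-, and $5$-vertex, precisely the bookkeeping illustrated in Figure~\ref{nineconfig}. A secondary difficulty is to make the edge set $B$ and the comparison argument in the previous paragraph uniform across the various configuration shapes while never exceeding nine edges.
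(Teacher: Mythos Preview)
This theorem is not proved in the paper at all: it is quoted verbatim from \cite{phamindependent} as the prior result that Theorem~\ref{IBNconfigurationproof} improves. There is therefore no ``paper's own proof'' to compare your proposal against.

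That said, your plan is the right one and is exactly the two-layer strategy used both in \cite{phamindependent} and in the present paper's improvement to $8$: first a discharging argument to produce an unavoidable list of local configurations (light edges or a high-degree hub with many independent low-degree neighbours), then a reducibility step exhibiting, for each configuration, an explicit edge set $B$ with $|B|\le 9$ and $\gamma^i(G-B)>\gamma^i(G)$. Your identification of the bottlenecks is also accurate: the discharging around $3$-, $4$-, $5$-vertices surrounded by triangles is where the work is, and the reducibility side is handled by an argument of the same shape as Lemma~\ref{lem2}.

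What you have written, however, is a strategy outline rather than a proof. Neither the discharging rules nor the configuration list are actually specified, and the reducibility paragraph only gestures at the comparison of dominating sets without carrying it out. If you want a self-contained proof of the bound $9$, you would need to write down concrete rules (coarser analogues of (R1)--(R7)) and verify nonnegativity case by case, or simply cite \cite{phamindependent}.
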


In the same paper, they constructed a class of planar graphs with $b_i(G) = 6$ and $\delta(G) \geq 3$. We utilize the discharging method to improve Pham and Wei's result, and we prove the Theorem \ref{maintheorem} using the discharging rule, and then use Theorem \ref{maintheorem} to prove the Theorem \ref{IBNconfigurationproof}.

In the discharging method, we employ three distinct approaches for assigning charges, as detailed in Proposition \ref{dischargingequations}. These methods are derived from Euler's Formula, considering the number of vertices, edges, and faces of a planar graph (refer to \cite{cranston2017introduction} for more details about discharging method).

To establish configurations in a planar graph with a minimum degree of at least 3, as outlined in Theorem \ref{maintheorem}, we rely on the vertex charging technique.

\begin{proposition} \cite{cranston2017introduction}
\label{dischargingequations}
Let $V(G)$ and $F(G)$ be the set of vertices and faces respectively of a planar graph $G$. Denote $l(f)$ to be the length of a face $f$. Then the following equalities hold for $G$.
\begin{equation*}
\begin{aligned}
   &\sum_{\substack{v\in V(G)}}(d(v)-6) + \sum_{\substack{f\in F(G)}}(2l(f)-6) = -12  && \text{vertex charging}\\
   &\sum_{\substack{v\in V(G)}}(2d(v)-6) + \sum_{\substack{f\in F(G)}}(l(f)-6) = -12  && \text{face charging}\\
   &\sum_{\substack{v\in V(G)}}(d(v)-4) + \sum_{\substack{f\in F(G)}}(l(f)-4) = -8  && \text{balanced charging}
\end{aligned}
\end{equation*}
\end{proposition}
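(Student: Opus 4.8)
The plan is to obtain all three identities as immediate consequences of Euler's formula together with the two standard incidence-counting identities, so that the only real content is bookkeeping. Throughout I will assume $G$ is connected (this is the only case needed in the discharging arguments, and otherwise one applies the statement to each component and sums), so that Euler's formula reads $|V(G)| - |E(G)| + |F(G)| = 2$.

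First I would record the two edge-counting identities. Summing vertex degrees counts each edge exactly twice, giving $\sum_{v\in V(G)} d(v) = 2|E(G)|$. Summing face lengths likewise counts each edge exactly twice — once from the boundary walk of each of the two faces meeting it, with a bridge traversed twice by the boundary walk of the single face it borders — giving $\sum_{f\in F(G)} l(f) = 2|E(G)|$. With these in hand, each of the three claimed equalities is just the appropriate linear combination, collapsed via Euler. For the vertex-charging identity,
\[
\sum_{v}(d(v)-6) + \sum_{f}(2l(f)-6) = \Bigl(\sum_{v} d(v)\Bigr) + 2\Bigl(\sum_{f} l(f)\Bigr) - 6|V(G)| - 6|F(G)| = 6|E(G)| - 6|V(G)| - 6|F(G)|,
\]
which equals $-6\bigl(|V(G)| - |E(G)| + |F(G)|\bigr) = -12$. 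The face-charging identity is the same computation with the coefficients $2$ and $1$ on the two sums interchanged: $2\sum_{v} d(v) + \sum_{f} l(f) - 6|V(G)| - 6|F(G)| = 6|E(G)| - 6|V(G)| - 6|F(G)| = -12$. The balanced-charging identity comes out identically but with the common factor $4$ in place of $6$: $\sum_{v} d(v) + \sum_{f} l(f) - 4|V(G)| - 4|F(G)| = 4|E(G)| - 4|V(G)| - 4|F(G)| = -4\cdot 2 = -8$.

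The one place requiring a word of care is the face-length count when $G$ has cut edges or non-$2$-connected blocks: one must adopt the convention that $l(f)$ denotes the length of the closed boundary walk of $f$ (so a bridge contributes $2$ to the length of the unique face incident with it), under which $\sum_{f} l(f) = 2|E(G)|$ still holds; alternatively one first reduces to the $2$-connected case. This is a routine subtlety rather than a genuine obstacle, and I do not expect any hard step in the proof — the proposition is purely a repackaging of Euler's formula and the handshake lemmas.
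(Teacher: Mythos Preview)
Your proof is correct and is exactly the standard derivation: Euler's formula combined with the two handshake identities $\sum_v d(v)=2|E(G)|$ and $\sum_f l(f)=2|E(G)|$, followed by the appropriate linear combinations. The paper itself does not supply a proof of this proposition; it is quoted from \cite{cranston2017introduction} as a known tool, so there is nothing further to compare against.
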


This paper is organized as follows. In Section 2, we implement discharging rules to prove Theorem \ref{maintheorem}, establish various facts and lemmas essential for the proof and Prove the Theorem \ref{maintheorem}. Section 3 is dedicated to proving preliminary lemmas and presenting additional facts crucial for the proof of Theorem \ref{IBNconfigurationproof}. Finally, in Section 4, we provide the proof for Theorem \ref{IBNconfigurationproof}.

\section{The Discharging Rules for Theorem \ref{maintheorem}}

We use the vertex-discharging method (the vertex charging equation in Proposition \ref{dischargingequations}) to establish Theorem~\ref{maintheorem}.

Let $G$ be a connected planar graph with $\delta(G) \geq 3$. Suppose that $G$ does not contain a configuration as described in the statement of Theorem~\ref{maintheorem}.
Initially assign a charge $d(v)-6$ to each vertex and a charge  $2d(f)-6$ to each face and then reassign vertex and face charges according to the following rules. In this section, we utilize notations introduced in Section \ref{introductiondefs}.

Additionally, we use the following notations. $f_{m^+}(v)$ represents an $m^+$-face incident with vertex $v$, and $f_{m^+}(u,v)$ represents an $m^+$-face incident with the edge $(u,v)$. $E_{m,n}(u,v)$ and $E_{m^+,n^+}(u,v)$ denote an edge $(u,v)$ incident with an $m$-face and $n$-face, and an $m^+$-face and $n^+$-face, respectively. $|E_{m,n}(u,v)|$ and $|E_{m^+,n^+}(u,v)|$ denote the number of edges $(u,v)$ incident with an $m$-face and $n$-face, and an $m^+$-face and $n^+$-face, respectively.
 
\begin{itemize}
    \item[(R1)] For a vertex $v$ with $d(v) = 3$, if there exists a $u \in N_{9^+}(v)$ and $E_{3,3}(u,v)$ (i.e., the edge $(u,v)$ is incident with two $3$-faces), then $v$ receives a charge of $1$ from $u$.
    
    \item[(R2)] Let $d(v)=3$. If $|F(v)|=1$, i.e., there is only one face,$f$, incident with $v$, then $v$ takes charge $3$ from the incident face $f$. If $|F(v)|\geq 2$, $v$ takes charge $1$, and $2$ from each incident $f_4(v)$ and $f_{5^+}(v)$, if any. 
    
    \item[(R3)] Let $d(v) = 3$ with $u \in N_{8^+}(v)$ and $E_{3,4^+}(u,v)$. If $d(u) = 8$ and $d(u) \neq 9$, or $d(u) = 9$ and $u$ does not fall into cases $(a)$ or $(b)$ as described below, then $v$ receives a charge of $\frac{1}{2}$ from $u$. Otherwise, if $d(u) = 9$ and $u$ falls into cases $(a)$ or $(b)$, then $v$ receives a charge of $\frac{1}{2}$ from $f_{4^+}(u,v)$, the $4^+$-face incident with the $(u,v)$-edge.

\textbf{Cases:}    
    \begin{itemize}
        \item[(a)]  $|F_{4^+}(u)|=1$, $|F_3(u)|=8$, and
        \begin{itemize}
\item[($\alpha$)] The vertex $w\in N(u)\setminus {v}$ has $E_{3,4^+}(u,w)$, indicating that $w$ is the other neighbor of $u$ on the $4^+$-face, and $d(w)\geq 8$, and
\item[($\beta$)] $|E_{3,3}(u,w)|\geq 3$, where $w\in N_3(u)$, meaning that there are at least three edges $(u,w)$ incident on two 3-faces, where $w\in N(u)$ and $d(w)=3$ (following the structure depicted in Figure \ref{nineconfig} $(e)(i)$, where we designate vertex $u_9$ as the $3$-vertex $v$, the center vertex $v$ as the degree $9$ neighbor $u$ of the $3$-vertex, the vertex $u_1$ as the $8^+$-vertex $w$, and specify $d(u_i) = 3$ for $i = 3,5,7$).
        \end{itemize}
        \item[(b)] $|F_{4^+}(u)|=2$ and $|F_3(u)|=7$, and 
         \begin{itemize}
             \item[($\alpha$)]
       $d(w)\geq 7$, where $w\in N(u)$ and $E_{4^+,4^+}(u,w)$, indicating the neighbor of $u$ on the two $4^+$-faces, and
             \item[($\beta$)] $|E_{3,3}(u,w)|\geq 3$, where $w\in N_3(u)$ (following the structure depicted in Figure \ref{nineconfig} $(e)(ii)$, where we designate vertex $u_8$ as the $3$-vertex $v$, the center vertex $v$ as the degree $9$ neighbor $u$ of the $3$-vertex, the vertex $u_9$ as the $7^+$-vertex $w$, and specify $d(u_i) = 3$ for $i = 2,4,6$).
         \end{itemize}
          \end{itemize} 
  
    \item[(R4)] Let $v$ be a  4-vertex. If $|F_{4^+}(v)|\leq 1$,
     then $v$ takes charge 1, or 2 from the $f_{4^+}(v)$ (if any) depending on whether it is a 4-face or 5-face respectively. Also, if there is $E_{3,3}(v,u)$ for $u\in N(v)$ with $d(u)\geq 8$, then $v$ takes charge $\frac{1}{2}$  from $u$.
    \item[(R5)]  Let $v$ be a 4-vertex. If $|F_{4^+}(v)|\geq 2$, then 
   take charge $\frac{2}{|F_{4^+}(v)|}$ from each  incident $4^+$-face.
   
     \item[(R6)] Let $v$ be a 5-vertex with $|F_3(v)|=5$, then $v$ takes charge  $\frac{1}{5}$ from each $N_{7^+}(v)$
    \item[(R7)] Let $v$ be a 5-vertex with $|F_{4^+}(v)|\geq 1$, then $v$  takes charge  $\frac{1}{|F_{4^+}(v)|}$ from each $F_{4^+}(v)$.

\end{itemize}
We next show, after redistributing the charges according to the rules (R1) to (R7),  that each vertex and face has a non-negative charge by establishing the following facts. 
Let $v$ be a vertex of $G$ throughout the remainder of the proof of Theorem~\ref{maintheorem}.
Note that $|F(v)| \leq d(v)$ with strict inequality only if $v$ is a cut-vertex. Without loss of generality, let $u_i \in N(v)$ for $i \in \{1, 2, \dots, d(v)\}$ along the clockwise direction around the neighborhood of $v$.

\begin{fact}
\label{fact1}
If  $u_i\in N_{5^-}(v)$, then $d(u_j)\geq 6$ for $u_j\in N(v)\cap N(u_i)$.
\end{fact}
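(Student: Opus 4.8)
The plan is a short argument by contradiction, carried out inside the standing hypothesis of this section: $G$ contains none of the configurations (a)--(h) of Theorem~\ref{maintheorem}, so in particular $G$ has no $9^-$-edge and no $10^-$-edge incident with a $3$-face. Suppose, toward a contradiction, that some common neighbor $u_j \in N(v)\cap N(u_i)$ has $d(u_j)\le 5$. First I would observe that, since $u_i,u_j\in N(v)$ and $u_iu_j\in E(G)$, the vertices $v,u_i,u_j$ span a triangle, and hence the edge $u_iu_j$ is incident with the $3$-face bounded by that triangle.

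Then I would split into cases on $d(u_i)+d(u_j)$ and match each case against a forbidden configuration. Since $d(u_i)\le 5$ and $d(u_j)\le 5$, we have $d(u_i)+d(u_j)\le 10$. If $d(u_i)+d(u_j)\le 9$, then $u_iu_j$ is a $9^-$-edge, i.e.\ configuration~(c). If $d(u_i)+d(u_j)=10$, then $u_iu_j$ is a $10^-$-edge incident with a $3$-face, i.e.\ configuration~(b) (and, in the subcase where $u_iu_j$ lies on two $3$-faces, also configuration~(a), since every $10^-$-edge is an $11^-$-edge). In every case $G$ contains a forbidden configuration, contradicting the standing assumption; hence $d(u_j)\ge 6$, as claimed.

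I expect the only point that needs genuine care to be the assertion that $u_iu_j$ lies on a $3$-face. When $\min\{d(u_i),d(u_j)\}\le 4$ it is unnecessary, because then $d(u_i)+d(u_j)\le 9$ and configuration~(c) already closes the case; so the assertion is really used only in the case $d(u_i)=d(u_j)=5$, where one must know that the triangle $vu_iu_j$ is a face of the embedding. This holds in the way the fact is invoked in the sequel, where $u_i$ and $u_j$ are consecutive neighbors of $v$ and the edge $u_iu_j$ closes off the corner between them at $v$; apart from this bookkeeping, the argument is entirely routine.
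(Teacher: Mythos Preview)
Your proof is correct and follows essentially the same approach as the paper, which simply records that the fact ``follows since $G$ avoids configurations (b) and (c).'' Your case split on $d(u_i)+d(u_j)$ merely unpacks that one-line justification, and your observation that the $3$-face hypothesis is only genuinely needed when $d(u_i)=d(u_j)=5$ (and is supplied in every application by the consecutiveness of $u_i,u_j$ around $v$) is more careful than the paper itself.
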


\begin{proof}
  This fact follows since $G$ avoids configurations (b) and (c).    
\end{proof}

\begin{fact}
    \label{fact2}
    If $v$ is a  $7^-$-vertex, then $v$ has a non-negative charge. 
\end{fact}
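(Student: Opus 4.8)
The plan is to handle the five possibilities $d(v)\in\{3,4,5,6,7\}$ in turn, starting from the observation that $v$ begins with charge $d(v)-6$, and that among (R1)--(R7) a vertex of degree at most $7$ only ever \emph{sends out} charge through (R6), and then only when $d(v)=7$. So for $d(v)\le 6$ it suffices to check that $v$ receives at least $6-d(v)$, and for $d(v)=7$ that $v$ sends out at most $1$. Throughout I would use three numerical consequences of $G$ avoiding configurations (a)--(c): the endpoint opposite $v$ on an edge at $v$ has degree at least $10-d(v)$; at least $11-d(v)$ if that edge additionally lies on a $3$-face; and at least $12-d(v)$ if it lies on two $3$-faces. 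I would also use $|F(v)|\le d(v)$, with the convention that a face meeting a cut-vertex $v$ more than once is counted with multiplicity; such a face is necessarily a $4^+$-face and contributes its (R2)/(R5)/(R7) charge once per incidence.

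The cases $d(v)\in\{4,5,6,7\}$ are quick. For $d(v)=6$ no rule touches $v$. For $d(v)=7$ the charge it can lose is $\tfrac15$ per neighbour that is a $5$-vertex all of whose faces are $3$-faces; if $u_i$ is such a neighbour then both faces at $(v,u_i)$ are triangles, so $u_{i-1},u_{i+1}\in N(v)\cap N(u_i)$ have degree $\ge 6$ by Fact~\ref{fact1}, hence no two such neighbours are consecutive around $v$, there are at most $\lfloor 7/2\rfloor=3$ of them, and $v$ keeps at least $1-\tfrac35>0$. For $d(v)=5$: if $v$ meets a $4^+$-face then (R7) alone transfers total $1$ to $v$; otherwise every face at $v$ is a triangle, so every edge at $v$ lies on two triangles, the opposite endpoints have degree $\ge 7$, and (R6) transfers $5\cdot\tfrac15=1$. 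For $d(v)=4$: if $|F_{4^+}(v)|\ge 2$ then (R5) transfers total $2$; if $|F_{4^+}(v)|=1$ and that face is a $5^+$-face then (R4) transfers $2$; if it is a $4$-face then the other three faces are triangles, the two interior edges of that run of triangles lie on two triangles and so have opposite endpoints of degree $\ge 8$, so (R4) transfers $1+\tfrac12+\tfrac12=2$; and if $|F_{4^+}(v)|=0$ then $v$ is a $4$-wheel hub, all four edges lie on two triangles, all neighbours have degree $\ge 8$, and (R4) transfers $4\cdot\tfrac12=2$. In every subcase $v$ ends with charge $0$.

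The substantive case is $d(v)=3$, with initial charge $-3$; I would split on $t=|F_{4^+}(v)|$, using $t+|F_3(v)|=|F(v)|$, which equals $3$ unless $v$ is a cut-vertex (in which case $|F(v)|\le 2$ and the multiply-incident face is a $4^+$-face supplying at least $2$ under (R2)). If $t=3$, (R2) alone yields at least $3$. If $t=2$ there is a single triangle at $v$, its two edges at $v$ are $E_{3,4^+}$-edges whose opposite endpoints have degree $\ge 8$, so (R3) adds $\tfrac12$ twice on top of the $\ge 2$ from (R2). If $t=1$ there are two triangles at $v$; the edge between them is an $E_{3,3}$-edge with opposite endpoint of degree $\ge 9$, contributing $1$ via (R1), and the other two edges are $E_{3,4^+}$-edges contributing $\tfrac12$ each via (R3), on top of the $\ge 1$ from (R2). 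If $t=0$ then $\{v,u_1,u_2,u_3\}$ is a $K_4$, all three edges are $E_{3,3}$-edges with opposite endpoints of degree $\ge 9$, and (R1) supplies $1$ three times. The cut-vertex case ($|F(v)|\le 2$) is handled analogously, the multiply-incident $4^+$-face playing the role of two of the faces. In every case the charge received is at least $3$, so $v$ finishes with charge $\ge 0$.

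I expect the only real obstacle to be the $d(v)=3$ bookkeeping: in each arrangement of faces around $v$ one must identify exactly which incident edges are $E_{3,3}$, which are $E_{3,4^+}$, and which are neither, and then read off the forced degrees of the $u_i$ from (a)--(c) so that precisely the combination of (R1), (R2), (R3) needed becomes available; dealing with the cut-vertex degeneracies, and noting that in (R3) it is immaterial to $v$'s ledger whether the $\tfrac12$ arrives from the degree-$9$ neighbour or from the incident $4^+$-face, are minor points to dispatch en route.
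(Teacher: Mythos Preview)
Your proposal is correct and follows essentially the same approach as the paper: a case split on $d(v)\in\{3,4,5,6,7\}$, using that configurations (a)--(c) force lower bounds on the degrees of neighbours across edges lying on zero, one, or two triangles, and then reading off the charge supplied by (R1)--(R7). The differences are purely organizational: your $d(v)=7$ argument bounds the number of eligible $5$-neighbours by $\lfloor 7/2\rfloor=3$ directly, whereas the paper argues by contradiction that six such neighbours would force a $12$-fan; your $d(v)=3$ analysis is indexed by $t=|F_{4^+}(v)|$ rather than by the exact list of face degrees; and you treat cut-vertices via a multiplicity convention, while the paper writes out the $|F(v)|\le 2$ subcases explicitly. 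None of this changes the substance of the proof.
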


\begin{proof} 
    Assume by contradiction that $v$ has a negative charge. \\
    
\textbf{Case 1:} $d(v)=7$. 
Then $v$ starts with charge $d(v)-6=1$ so that $v$ loses charge greater than 1. 
The vertex $v$ only loses charge $\frac{1}{5}$ by
(R6) for each $w \in N_5(v)$ with 
$F(w)=F_3(w)$ (note that $7^-$-vertices does not lose charges to $N_3(v)$ and $N_4(v)$). Thus there are at least six such $w$ vertices. But no two of these vertices are adjacent by Fact 1. 
Hence $v\cup N(v)$ contains a $12$-fan with hub $v$ contradicting that $d(v)=7$. \\

\textbf{Case 2:}  $d(v)=6$. Then $v$ begins and ends with non-negative charge $0$ as the rules (R1)-(R7) do not apply to $v$; a contradiction.\\

\textbf{Case 3:} $d(v)=5$. Then $v$ begins with charge $-1$ and hence receives a total charge less than $1$. Thus $|F_{4^+}(v)|=0$ by (R7). Then $F(v)=F_3(v)$ so that  $v \cup N(v)$ is a 5-wheel. Each rim vertex of the wheel is a $7^+$-vertex as $G$  avoids configuration (a). Thus $v$ receives charge $\frac{1}{5}$ from each of its five neighbors  by  (R6); a contradiction.\\

\textbf{Case 4:} $d(v)=4$. Then $v$ starts with charge $-2$ and receives a total charge less than $2$. Thus $(R5)$ implies that each face meeting $v$ is a 3-face except for possibly one $4^+$-face. Now, $(R4)$ implies $|F_{5^+}(v)|=0$. Thus, $|F_4(v)|\leq 1$. 
If $v$ is a cut-vertex, then the minimum degree condition implies that  $|F_{5^+}(v)|\geq 1$; a contradiction. Thus $v$ is not a cut-vertex and $|F(v)|=4=d(v)$. Thus, Either $|F_3(v)|=3$ and $|F_4(v)|=1$, or $|F_3(v)|=4$. 

\textbf{Case 4.1} $|F_3(v)|=3$ and $|F_4(v)|=1$. Then, there is a $4$-fan $\{v,u_1,u_2,u_3,u_4\}$ consisting of $v\cup N(v)$.  Thus, $E_{3,3}(v,u_i)$ with $u_i\in N_{8^+}(v)$ for $i\in \{2,3\}$ as $G$ avoids the configuration $(a)$. Thus, $v$ receive charge $\frac{1}{2}$ from each of $u_i$ for $i\in \{2,3\}$ by $(R4)$, $1$ from $4$-face incidents with $v$; a contradiction. \\

\textbf{Case 4.2} $|F_3(v)|=4$, $(u_1,u_4) \in E(G)$. Then, the set  $\{v,u_1,u_2,u_3,u_4\}$ is an 4-{\it wheel}. Thus, $E_{3,3}(v,u_i)$ with $u_i\in N_{8^+}(v)$ for $i\in \{12,3,4\}$ as $G$ avoids the configuration $(a)$. Hence $v$ receives charge $\frac{1}{2}$ from each $u_i$ for $i\in \{12,3,4\}$ by $(R4)$; a contradiction.
 
\vspace{0.3cm}

\textbf{Case 5:} $d(v)=3$. Then, $v$ starts with a charge of $-3$ and receives a charge less than $3$. \\

\textbf{Case 5.1:} Suppose that $v$ is a cut-vertex. Thus,  $G-\{v\}$ has two or three components. i.e. $|F(v)|\leq2$.\\

\textbf{Case 5.1.1:} First consider $G-\{v\}$ has two components $G_1$, and $G_2$. i.e. $|F(v)=2|$. Note that in this case, $v$ incidents with a cut edge $e$. Since $G$ is simple and planar with $\delta(G)\geq 3$, and $G$ avoids configuration $(c)$, $G_1, G_2$ are simple and planar with  $\delta(G_1),\delta(G_2)\geq 3$. Thus, $G_1, G_2$ contains a cycle. So, $l(f_1),l(f_2)\geq 3$, where $f_1,f_2$ are the outer faces of $G_1$ and $G_2$ respectively. Thus in $G$, we have a $f_{8^+}(v)$ that consists of the cut edge $e$, and the two $f_{3^+}(v)$ faces ( note that the cut edge count twice to the length of the outer face). Then $v$ receives charge 2 from the $f_{8^+}(v)$-face by $(R2)$. If $f_{3^+}(v)$ is a $3$-face, then $u_i\in N_{8^+}(v)$ for $i\in\{ 1,2\}$ where $u_i\in f_{3,8^+}(v,u_i)$ as $G$ avoids the configuration $(b)$ ( if there is an edge $(3,u)$ with $E_{3,4^+}(3,u)$), then $d(u)\geq 8$). Thus $v$ receive charge $\frac{1}{2}$ from each $u_i$ for $i\in \{ 1,2\}$ by $(R3)$; a contradiction. If $f_{3^+}(v)$ is a $4^+$-face, then $v$ receive charge $1$ from the $4^+$-face by $(R2)$; a contradiction.\\

\textbf{Case 5.1.2:} Now consider that case when $G-\{v\}$ has three components, then $|F(v)|=1$. Thus $v$ receives charge $3$ by $(R2)$; a contradiction.\\

\textbf{Case 5.2:} Now suppose $v$ is not a cut vertex. Thus, $|F(v)|=3$. If $|F_{4^+}(v)|=3$, or $|F_{5^+}(v)|=1$ and $|F_{4^+}(v)|\geq 1$, then $v$ receives a total of charge $3$ by $(R2)$; a contradiction. Thus $|F_{4}(v)|\leq 2$ and $|F_3(v)|\geq 1$, and $|F_{5^+}(v)|=1$ and $|F_3(v)=2|$, or $|F_{5^+}(v)|=1$ and $|F_3(v)|=2$.

\textbf{Case 5.2.1:} Consider $|F_4(v)|=2$ and $|F_3(v)|=1$. Then, there are two 
 such that $d(u_i)\geq 8$ where  $E_{3,4}(v,u_i)$ for $i\in \{1,2\}$. Thus, $v$ receives a total of charge $1$ from $u_i$ or from the $4^+$-face incident with the $(v,u_i)$-edge for $i\in \{1,2\}$ by $(R3)$, and a total charge of $ 2 $ from the two $F_{4}(v)$ by $(R2)$; a contradiction. 
 
\textbf{Case 5.2.2:} Consider $|F_4(v)|=1$, and $|F_3(v)|=2$. Then $d(u)\geq 9$ for $u\in N(v)$ with $E_{3,3}(v,u)$ since $G$ avoids the configuration $(a)$, and $d(u_i)\geq 8$ with $E_{3,4}(v,u_i)$ for $i\in \{1,2\}$. Thus, $v$ receive charge $1$ from $u$ by $(R1)$, and a total of charge $1$ from $u_i$ or from the $4^+$-face incident with $(v,u_i)$ for $i\in \{1,2\}$ by $(R3)$, and $1$ from $f_4(v)$ by $(R2)$, which total upto charge $3$; a contradiction. 

\textbf{Case 5.2.3:} Consider  $|F_4(v)|=0$ and $|F_3(v)|=3$. Then, there is a 3-fan $\{v,u_1,u_2,u_3\}$ consisting of $v\cup N(v)$. This set is a 3-wheel. The graph $G$ avoids configuration $(a)$ so that $d(u_i)\geq 9$ for $i \in \{1,2,3\}$. Thus, $v$ receives charge $1$ from each $u_i$ for $i \in \{1,2,3\}$ by $(R1)$; a contradiction. 

\textbf{Case 5.2.4:} Consider $|F_{5^+}(v)|=1$ and $|F_3(v)|=2$. since $G$ avoids the configuration $(a)$, $d(u)\geq 9$ for $E_{3,3}(v,u)$ providing that $v$ receive charge $1$ from $u$ by $(R1)$ and charge $2$ from $f_{5^+}(v)$ by $(R2)$; a contradiction.

This completes the proof of Fact 2.
\end{proof}

\begin{fact}
\label{fact3}
     Suppose  $d(v)\geq 8$ and  $\{v, u_1,u_2,\dots,u_k\}$  is a $k$-fan contained in $v\cup N(v)$ such that $E_{3,4^+}(u_i,v)$ for $i\in \{1,2,\dots,k\}$, and $E_{3,3}(u_i,v)$ for $1<i<k$ for  $k\geq 3$ and $(u_1,v),(u_k,v)$ are in two consecutive $F_4(v)$s for $k\leq 2$ (see Figure \ref{fact3picture} $(u_3,v)$-edge for $k=1$, and $(u_1,v)$ and $(u_2,v)$-edges for $K=2$), then $v$ can lose at most $\frac{k-1}{2}$ charge to $\{u_1,u_2,\dots,u_k\}$. In particular, $v$ lose at most  $\frac{k}{4}$ and $\frac{k+1}{4}$ charge to  $\{u_1,u_2,\dots,u_k\}$ when $k$ is even and $k$ is odd, respectively when $d(v)=8$, and at most $\frac{k-1}{2}$ charge to $\{u_1,u_2,\dots,u_k\}$ when $d(v)\geq 9$.
\end{fact}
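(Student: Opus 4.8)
The plan is to track, rule by rule, how much charge the degree-$\geq 8$ hub $v$ can push onto a single rim vertex $u_i$, and then to combine this per-vertex bound with Fact~\ref{fact1} to control how many rim vertices can receive anything at all. The quantitative claim then reduces to a short counting argument about independent sets in a path.

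First I would do the per-edge bookkeeping. Under (R1)--(R7) the only rim vertices that can take charge from $v$ are those of degree at most $5$; a $6^+$-vertex receives nothing. Fix a low-degree rim vertex $u_i$ and split on its position. If $1<i<k$, the spoke $(v,u_i)$ is an $E_{3,3}$-edge, so (R3) cannot fire; hence $u_i$ receives at most $1$ (when $d(u_i)=3$, via (R1), and only if $d(v)\geq 9$), at most $\tfrac12$ (when $d(u_i)=4$, via (R4)), or at most $\tfrac15$ (when $d(u_i)=5$, via (R6), possible only if all five faces at $u_i$ are $3$-faces). If $i\in\{1,k\}$, the spoke $(v,u_i)$ is an $E_{3,4^+}$-edge, which blocks (R1), (R6), and the hub-clause of (R4); only (R3) can act, giving $\tfrac12$ when $d(u_i)=3$ (routed from $v$, or, if $d(v)=9$ and $v$ meets case (a) or (b) of (R3), from the incident $4^+$-face — in either case $v$ parts with at most $\tfrac12$). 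Summarising: when $d(v)\geq 9$, an interior rim vertex costs $v$ at most $1$ and an end rim vertex ($u_1$ or $u_k$) at most $\tfrac12$; when $d(v)=8$ the interior $3$-vertices cost $0$ ((R1) needs $d(v)\geq 9$), so every low-degree rim vertex costs $v$ at most $\tfrac12$. For $k\leq 2$ the hypothesis instead places $(v,u_1),(v,u_k)$ on $4$-faces only; these spokes touch no $3$-face, so no rule fires and $v$ loses $0\leq\tfrac{k-1}{2}$, settling the small cases.

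Next I would invoke Fact~\ref{fact1}: two consecutive rim vertices are adjacent and both lie in $N(v)$, hence cannot both have degree at most $5$. Thus $L=\{i:d(u_i)\leq 5\}$ is an independent set in the path $u_1u_2\cdots u_k$. Write $b=|L\cap\{1,k\}|\in\{0,1,2\}$ and $a=|L|-b$ for the number of interior low-degree rim vertices. Using independence of $L$ together with the forced exclusions ($1\in L\Rightarrow 2\notin L$ and $k\in L\Rightarrow k-1\notin L$), a maximum independent set count in the appropriate sub-path of $\{2,\dots,k-1\}$ gives $a\leq\lceil(k-2)/2\rceil$, $a\leq\lceil(k-3)/2\rceil$, $a\leq\lceil(k-4)/2\rceil$ in the cases $b=0,1,2$ respectively, and in all three cases this yields $2a+b\leq k-1$.

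Finally I would assemble the bound. For $d(v)\geq 9$ the total charge $v$ loses to the fan is at most $1\cdot a+\tfrac12\cdot b=\tfrac12(2a+b)\leq\tfrac{k-1}{2}$. For $d(v)=8$ each low-degree rim vertex costs at most $\tfrac12$, so the total is at most $\tfrac12|L|\leq\tfrac12\lceil k/2\rceil$, which is $\tfrac{k}{4}$ for even $k$ and $\tfrac{k+1}{4}$ for odd $k$; since $\tfrac{k+1}{4}\leq\tfrac{k-1}{2}$ for $k\geq 3$, the uniform bound $\tfrac{k-1}{2}$ holds in this case too. I expect the main obstacle to be not any single step but pinning down the first-paragraph case analysis exactly: one must remember that an $E_{3,3}$-spoke blocks (R3) while an $E_{3,4^+}$-spoke blocks (R1)/(R6), that (R1) requires $d(v)\geq 9$, and that for $d(v)=9$ the $\tfrac12$ at an end may be routed through a face rather than through $v$ — the whole estimate rests on the resulting ``$1$ interior / $\tfrac12$ end'' dichotomy, after which the inequality $2a+b\leq k-1$ is routine.
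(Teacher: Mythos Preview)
Your approach is essentially the paper's: bound the charge transferred along each spoke according to whether it is interior ($E_{3,3}$) or terminal ($E_{3,4^+}$), use Fact~\ref{fact1} to see that the low-degree rim vertices form an independent set on the path $u_1\cdots u_k$, and then count. Your packaging via the inequality $2a+b\le k-1$ is in fact tidier than the paper's parity split into the two extremal sets $V_1,V_2$, but the content is the same.

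There is, however, a slip in your treatment of $k=2$. You assert that for $k\le 2$ ``these spokes touch no $3$-face, so no rule fires and $v$ loses $0$''. That is correct for $k=1$ (the single spoke is $E_{4^+,4^+}$), but for $k=2$ the fan contains the triangle $vu_1u_2$, so each spoke $(v,u_i)$ is $E_{3,4^+}$, not $E_{4^+,4^+}$. Hence (R3) can fire: if $d(u_1)=3$ then $v$ may lose $\tfrac12$ to $u_1$. The paper handles this by noting that if $d(u_1)=3$ then the edge $u_1u_2$ lies on a $3$-face with degree sum $\le 10$ unless $d(u_2)\ge 8$ (avoiding configuration~(b)), so at most one of $u_1,u_2$ is a $5^-$-vertex and the loss is at most $\tfrac12=\tfrac{k-1}{2}$. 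Your own general machinery already gives this: for $k=2$ there are no interior vertices ($a=0$), Fact~\ref{fact1} forces $b\le 1$, and $2a+b\le 1=k-1$. So the fix is simply to drop the incorrect sentence and let the $k\ge 2$ argument run uniformly; only $k=1$ genuinely needs a separate line.
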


   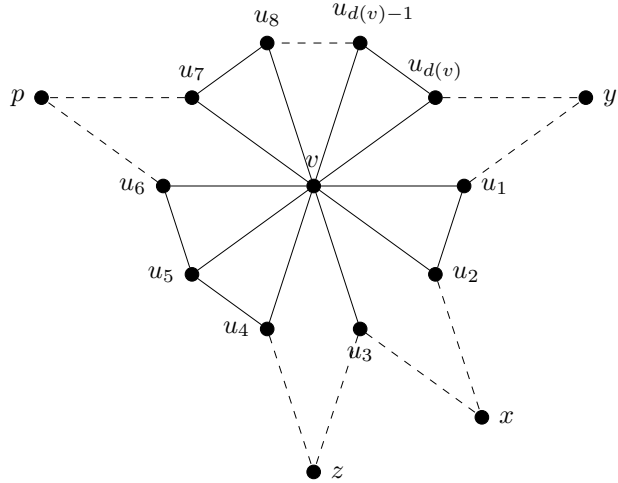
\begin{figure}[h]
   \label{fact3picture}
   \begin{center}
\begin{tikzpicture}[scale=1]

    \node[circle,scale=0.5, draw=black, fill=black, minimum size=0.1mm](1) at (0,0){};
\node[below of=1,yshift=28,xshift= -.5,label=:\small{$v$}]{};
    \path (1) ++(0:2) node[circle,scale=0.5,draw=black, fill=black, minimum size=0.1mm, label=right:\small{$u_1$}](2) {};
    \path (1) ++(36:2) node[circle,scale=0.5,draw=black, fill=black, label=above:\small{$u_{d(v)}$}](3){}; 
    \path (1) ++(72:2) node[circle,scale=0.5,draw=black, fill=black](4){}; 
     \node[below of=4,yshift=28,xshift= 5,label= above:\small{$u_{d(v)-1}$}]{};
    \path (1) ++(108:2) node[circle,scale=0.5,draw=black, fill=black, label=above:\small{$u_8$}](5){}; 
   
    \path (1) ++(144:2) node[circle,scale=0.5,draw=black, fill=black, label=above:\small{$u_7$}](6){}; 
    \path (1) ++(180:2) node[circle,scale=0.5,draw=black, fill=black, label=left:\small{$u_6$}](7){}; 
    \path (1) ++(216:2) node[circle,scale=0.5,draw=black, fill=black, label=left:\small{$u_5$}](8){};
   
    \path (1) ++(252:2) node[circle,scale=0.5,draw=black, fill=black, label=left:\small{$u_4$}](9){};

    \path (1) ++(288:2) node[circle,scale=0.5,draw=black, fill=black, label=below:\small{$u_3$}](10){};
    
    \path (1) ++(324:2) node[circle,scale=0.5,draw=black, fill=black, label=right:\small{$u_2$}](11){};
    
     \path (11) ++(-72:2) node[circle,scale=0.5,draw=black, fill=black, label=right:\small{$x$}](12){};

       \path (2) ++(36:2) node[circle,scale=0.5,draw=black, fill=black, label=right:\small{$y$}](13){};

      \path (9) ++(-72:2) node[circle,scale=0.5,draw=black, fill=black, label=right:\small{$z$}](17){};
        \path (7) ++(144:2) node[circle,scale=0.5,draw=black, fill=black, label=left:\small{$p$}](18){};
    
    \path [-,ultra thin] (1) edge node[left] {} (2);
    \path [-,ultra thin] (1) edge node[left] {} (3);
    \path [-,ultra thin] (1) edge node[left] {} (4);
    \path [-,ultra thin] (1) edge node[left] {} (5);
    \path [-,ultra thin] (1) edge node[left] {} (6);
    \path [-,ultra thin] (1) edge node[left] {} (7);
    \path [-,ultra thin] (1) edge node[left] {} (8);
     \path [-,ultra thin] (1) edge node[left] {} (9);
     \path [-,ultra thin] (1) edge node[left] {} (10);
     \path [-,ultra thin] (1) edge node[left] {} (11);  
    \path [-,ultra thin] (3) edge node[left] {} (4);
     \path [-,dashed, ultra thin] (4) edge node[left] {} (5);
    \path [-,ultra thin] (5) edge node[left] {} (6);
    
     \path [-,ultra thin] (7) edge node[left] {} (8);
     \path [-,ultra thin] (8) edge node[left] {} (9);  
    
     \path [-,ultra thin] (2) edge node[left] {} (11);

\path [-, dashed, ultra thin] (11) edge node[left] {} (12);
\path [-,dashed, ultra thin] (10) edge node[left] {} (12);
  \path [-,dashed, ultra thin] (2) edge node[left] {} (13);
  \path [-,dashed, ultra thin] (3) edge node[left] {} (13);
  \path [-,dashed,ultra thin] (9) edge node[left] {} (17);
     \path [-,dashed, ultra thin] (10) edge node[left] {} (17);
       \path [-,dashed, ultra thin] (6) edge node[left] {} (18);
       \path [-,dashed, ultra thin] (7) edge node[left] {} (18);
     
   \end{tikzpicture}
   \caption{$1,2,3$, and $k$-fans around $v\cup N(v)$ (dashed links are paths of length at least one)}
   \label{fact3}
   \end{center}
   \end{figure}

\begin{proof}
 Let $G$ be graph without having the configurations $(a)-(h)$, and  $d(v)\geq 8$ and  $\{v, u_1,u_2,\dots,u_k\}$  is a $k$-fan contained in $v\cup N(v)$ such that edges $(u_1,v)$ and $(u_k,v)$ are on $F_4(v)$s. Let $k=1$. Then we have $f_{4^+,4^+}(u_1,v)$. Suppose $d(u_1)\leq 5$. $v$ does not give charge to $u_1$ by $(R1)-(R7)$. Thus, the result is true for $K=1$. Suppose $k=2$. Without loss of generality assume $d(u_1)=3$, then $d(u_2)\geq 8$ since $G$ avoids the configuration $(b)$. Thus, $v$ lose at most $\frac{1}{2}=\frac{2-1}{2}=\frac{k-1}{2}$ when $k=2$. Now consider the case when $k\geq 3$. Then $(u_1,v),( u_k,v)$ on a $F_{4^+}(v)$ and a $F_3(v)$, and have $f_{3,3}(u_i,v)$. Since $G$ avoids the configuration $(a), (b)$ the $k$-fan have at most $\lfloor\frac{k+1}{2}\rfloor$ of $5^-$-vertices. Consider the case when $k$ is odd. Then, $V_1=\{u_1, u_3, u_5,\dots, u_{k-2},u_k\}$, or $V_2=\{u_2, u_4, u_6,\dots, u_{k-3}, u_{k-1}\}$ are the maximal set of $5^-$-vertices in $k-$fan. Note that since we avoid the configurations $(a),(b)$, and $(c)$, it is enough to consider maximal possible $N_3(v)$ and $N_4(v)$ on the $k$-fan when $d(v)\geq 9$, and $d(v)=8$ respectively. Thus, if $d(v)\geq 9$, $v$ lose at most $\frac{k-3}{2}+2*\frac{1}{2}=\frac{k-1}{2}$ , and at most $\frac{k-1}{2}$ for $V_1$ and $V_2$ respectively. Now, if $d(v)=8$, then $v$ lose at most $\frac{1}{2}*\frac{k-3}{2}=\frac{k-3}{4}$, or at most $\frac{1}{2}*\frac{k-1}{2}=\frac{k-1}{4}$, for $V_1$, and $V_2$ respectively (note that when $d(v)=8$, $v$ does not lose charge to $u_1$ where $f_{3,4^+}(u_1,v)$). Thus, when $k$ is odd with $d(v)=8$, $v$ lose at most $\frac{k-1}{4}$. Thus, the result is true when $k$ is odd. Now consider the case when $k$ is even. Then, $\{u_1, u_3, u_5,\dots, u_{k-3},u_{k-1}\}$, or $\{u_2, u_4, u_6, \dots, u_{k-2}, u_k\}$ are the maximal set of $5^-$-vertices in $k-$fan. Note that here we have $u_1$ or $u_k$ on $F_3(v)$ and $F_{4^+}(v)$ but not both. Thus, $v$ lose at most $\frac{1}{2}$ to these vertices by $(R3)$, and $v$ lose $1$ to each of the remaining $\frac{k-2}{2}$ vertices. Thus $v$ lose at most $\frac{1}{2}+\frac{k-2}{2}=\frac{k-1}{2}$ to the $k$-fan. Thus we have the result.   
\end{proof}

\begin{fact}
    \label{fact4}
     Every $v$ with $8\leq d(v)\leq 12$ with $|F_3(v)|=d(v)$ ends with a non-negative charge.
\end{fact}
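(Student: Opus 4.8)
Then I would split the verification according to $d(v)$, using nothing about the global graph beyond Fact~\ref{fact1} and the discharging rules, plus the hypothesis that $G$ is configuration-free.

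The plan is to exploit that $|F_3(v)|=d(v)$ forces the local picture to be a wheel: all $d(v)$ faces at $v$ are triangles, so every consecutive rim edge $u_iu_{i+1}$ (indices modulo $d(v)$) is present, that is, $v\cup N(v)$ contains a $d(v)$-wheel. The first observation is that $v$ never \emph{receives} charge, since rules (R1)--(R7) only move charge onto vertices of degree at most $5$; hence it suffices to bound the charge $v$ \emph{sends out}. Going through the rules, $v$ gives charge only to its $5^-$-neighbours, and to each it gives at most: $1$ to a $3$-neighbour by (R1) --- and this occurs only if $d(v)\ge 9$, because (R1) requires the donor to have degree at least $9$; $\tfrac12$ to a $4$-neighbour by (R4); and $\tfrac15$ to a $5$-neighbour by (R6). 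The side conditions in (R4)--(R7) can only make $v$ send less (the charge then coming from an incident face), and $v$ sends nothing to $6^+$-neighbours or to faces. In particular, when $d(v)=8$ the vertex $v$ sends nothing to its $3$-neighbours.

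Next I would invoke Fact~\ref{fact1}: the set $S$ of $5^-$-neighbours of $v$ is independent, since if two of them were adjacent then one would be a common neighbour of $v$ and the other ($5^-$) vertex and hence have degree at least $6$. Because $S$ meets no two consecutive rim vertices, $|S|\le\lfloor d(v)/2\rfloor$. Write $a,b,c$ for the numbers of vertices of $S$ of degree $3,4,5$, so that $a+b+c=|S|\le\lfloor d(v)/2\rfloor$ and the charge leaving $v$ is at most $a+\tfrac b2+\tfrac c5$ when $d(v)\ge 9$ and at most $\tfrac b2+\tfrac c5$ when $d(v)=8$; the final charge of $v$ is $d(v)-6$ minus this quantity. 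Then I would split on $d(v)$. For $d(v)\in\{11,12\}$ the charge out is at most $a+b+c=|S|\le\lfloor d(v)/2\rfloor=d(v)-6$, and for $d(v)=8$ it is at most $\tfrac12|S|\le\tfrac12\lfloor 8/2\rfloor=2=d(v)-6$; in either case $v$ ends non-negative with no appeal to any forbidden configuration. The substantive cases are $d(v)\in\{9,10\}$; set $m=\lfloor d(v)/2\rfloor$, so $d(v)-6=m-1$. If $a\le m-2$, the charge out is at most $a+\tfrac12(|S|-a)=\tfrac12(a+|S|)\le\tfrac12((m-2)+m)=m-1$. If $a=m-1$ and $|S|=m-1$, then $b=c=0$ and the charge out is at most $m-1$. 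In the only remaining case, $a\ge m-1$ and $|S|=m$ (recall $a\le|S|\le m$), so $I:=S$ is an independent subset of $N(v)$ of size $m$ containing at least $m-1=\lfloor d(v)/2\rfloor-1$ vertices of degree $3$ together with one more vertex of degree at most $5$; since every face at $v$ is a triangle, this is exactly configuration~(d)(i) of Theorem~\ref{maintheorem}, contradicting the standing assumption that $G$ is configuration-free. Hence in all cases the charge leaving $v$ is at most $d(v)-6$, so $v$ ends with non-negative charge.

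The step I expect to be the main obstacle is the charge bookkeeping: one has to check carefully that the rules really do cap $v$'s outgoing charge at $1,\tfrac12,\tfrac15$ per $3$-, $4$-, $5$-neighbour respectively (in particular that (R1) is inert at a degree-$8$ vertex, and that every alternative routing in (R4)--(R7) only decreases what $v$ gives), and that in the last case the independent set $S$ produced by Fact~\ref{fact1} genuinely realises configuration~(d) with the prescribed arrangement of faces. Beyond that, the proof is the short arithmetic above.
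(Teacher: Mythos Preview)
Your proof is correct and follows essentially the same approach as the paper: bound the set $S=N_{5^-}(v)$ by $\lfloor d(v)/2\rfloor$ via Fact~\ref{fact1}, cap the per-neighbour outflow at $1,\tfrac12,\tfrac15$ for degrees $3,4,5$ (with (R1) inert when $d(v)=8$), handle $d(v)\in\{8,11,12\}$ by pure arithmetic, and for $d(v)\in\{9,10\}$ invoke configuration~(d) when $|S|=\lfloor d(v)/2\rfloor$ with at least $\lfloor d(v)/2\rfloor-1$ three-vertices. Your case split is in fact tidier than the paper's, and you correctly observe that only (d)(i) is needed here, whereas the paper's wording gestures at both (d)(i) and (d)(ii).
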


\begin{proof} of Fact \ref{fact4}
 Let $G$ be graph without having the configurations $(a)-(h)$, and $v\in V(G)$ with $8\leq d(v)\leq 12$ with $|F_3(v)|=d(v)$. Suppose by contradiction that $v$ ends with a negative charge. i.e. $v$ lose more than $d(v)-6$. Thus, $v$ loses more than $2, 3, 4, 5,$ and $6$ charge from $8,9,10,11$, and $12$-vertices respectively. Now, consider the case when $d(v)=8$. There is a $8$-fan $\{v,u_1,u_2,\dots, u_{8}\}$ consisting of $v\cup N(v)$. By Fact 1, $|N_{5^-}(v)|\leq \lfloor \frac{d(v)}{2} \rfloor=2$. Thus, $|E_{3,3}(v,u)|\leq4$ for $u\in N(v)$ with $d(u)\leq 5$. $v$ lose at most charge $2$ by $(R4)$; contradiction. Suppose $d(v)=9$. By the same argument,  $|E_{3,3}(v,u)|\leq4$ for $u\in N(v)$ with $d(u)\leq 5$. Since $G$ avoids the configuration $(d)$,  $|N_{5^-}(v)|\leq 1$, $|N_3(v)|\leq 2$ and $|N_{5^-}(v)|\leq 1$, or $|N_{3}(v)|\leq 1$ and $|N_{5^-}(v)|\leq 2$. Thus, $v$ loses at most $3$; a contradiction. Consider $d(v)=10$.  $|E_{3,3}(v,u)|\leq5 $ for $u\in N(v)$ with $d(u)\leq 5$. Since $G$ avoids the configuration $(d)$,  $|N_{3}(v)|\leq 4$, $|N_{3}(v)|\leq 3$ and $|N_{5^-}(v)|\leq 1$, $|N_{3}(v)|\leq 3$ and $|N_{5^-}(v)|\leq 1$, or $|N_{3}(v)|\leq 2$ and $|N_{5^-}(v)|\leq 2$. Thus, $v$ loses at most charge 4; a contradiction. Now, consider $11 \leq d(v)\geq 12$.  $|E_{3,3}(v,u)|\leq \lfloor\frac{d(v)}{2}\rfloor$ for $u\in N(v)$ with $d(u)\leq 5$. Thus, $v$ lose at most $d(v)-6- \lfloor\frac{d(v)}{2}\rfloor\geq 0$ for $d(v)\geq 11$; a contradiction.    
\end{proof}

\begin{lemma} \label{lem1}
If $v$ be such that $d(v)\geq 8$, then $v$ loses at most $\frac{d(v)-|F_{4^+}(v)|}{2}$ charge to its neighbors.
\end{lemma}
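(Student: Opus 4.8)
The plan is to bound the charge $v$ sends out by decomposing the neighborhood of $v$ according to which faces are $3$-faces. A vertex $v$ with $d(v)\geq 8$ only sends charge to $3$-vertices (via (R1) and (R3)) and $4$-vertices (via (R4)), and in every such case the recipient $u$ is incident with $v$ along an edge $(u,v)$ lying on two $3$-faces (for (R1), (R4)) or on a $3$-face and a $4^+$-face (for (R3)). In particular, $v$ never sends charge across an edge both of whose incident faces are $4^+$-faces. So the first step is to cut the cyclic sequence of faces around $v$ at each $4^+$-face: the $|F_{4^+}(v)|$ many $4^+$-faces partition the remaining faces into maximal blocks of consecutive $3$-faces, and each such block of $t$ consecutive $3$-faces corresponds to a fan $\{v,u_1,\dots,u_k\}$ with $k=t+1$ rim vertices, where the two boundary edges $(u_1,v)$ and $(u_k,v)$ each lie on a $3$-face and a $4^+$-face (or on two $4^+$-faces when the block is empty, $t=0$, $k$ small), exactly the situation covered by Fact \ref{fact3}.

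The second step is to apply Fact \ref{fact3} to each such fan: $v$ loses at most $\frac{k-1}{2}$ charge to a fan with $k$ rim vertices (the $d(v)\geq 9$ bound; for $d(v)=8$ the bound $\frac{k+1}{4}$ or $\frac{k}{4}$ is even smaller, and in any event at most $\frac{k-1}{2}$ for $k\geq 3$, while for $k\leq 2$ one checks directly as in Fact \ref{fact3}). If there are $m=|F_{4^+}(v)|\geq 1$ faces of degree $\geq 4$ around $v$, and they split the $3$-faces into blocks giving fans with $k_1,\dots,k_m$ rim vertices (allowing $k_i$ as small as $1$ — a lone edge between two $4^+$-faces), then summing the rim counts telescopes: the total number of edges incident with $v$ is $d(v)$, each $4^+$-face "absorbs" one edge as a shared boundary between consecutive fans, so $\sum_{i=1}^m k_i = d(v) - m$ when $m\geq 1$. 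Hence the total charge lost is at most $\sum_{i=1}^m \frac{k_i-1}{2} = \frac{(d(v)-m) - m}{2} = \frac{d(v)-2m}{2}$. Hmm — this is stronger than the claimed $\frac{d(v)-m}{2}$, so I should double-check the edge-counting convention; more conservatively, using $\sum k_i \le d(v)$ and dropping the $-1$'s gives $\sum \frac{k_i-1}{2} \le \frac{d(v)-m}{2}$, which is exactly the stated bound (each of the $m$ fans contributes a $-\tfrac12$). The safe route is: there are $|F_{4^+}(v)|$ boundary edges that are "last edges" of fans contributing a saved $\tfrac12$ each, and the remaining at most $d(v)-|F_{4^+}(v)|$ edges contribute at most $\tfrac12$ each, giving $\le \frac{d(v)-|F_{4^+}(v)|}{2}$.

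The third step handles the degenerate case $|F_{4^+}(v)| = 0$, i.e. $F(v)=F_3(v)$: then the bound to prove is $\frac{d(v)}{2}$, and this is precisely Fact \ref{fact4} rephrased (in the all-$3$-face case $v$ loses at most $d(v)-6 \le \frac{d(v)}{2}$ for $8 \le d(v) \le 12$, since $d(v)-6 \le d(v)/2 \iff d(v)\le 12$; for $d(v)\ge 13$ a direct count with $|N_{5^-}(v)|\le \lfloor d(v)/2\rfloor$ from Fact \ref{fact1} and the fact that $v$ gives at most $1$ per such neighbor via (R1)/(R4) finishes it). So the lemma follows by combining Fact \ref{fact3} (applied fan-by-fan when $|F_{4^+}(v)|\geq 1$) with Fact \ref{fact4} (when $|F_{4^+}(v)|=0$). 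The main obstacle I anticipate is getting the edge/fan bookkeeping exactly right at the boundaries — in particular making sure each $4^+$-face is counted once (not twice) as it is shared by two adjacent fans, and correctly treating short fans ($k=1,2$) and the case where $v$ is a cut vertex so that $|F(v)| < d(v)$; these are the places where an off-by-one in the telescoping sum would break the claimed coefficient.
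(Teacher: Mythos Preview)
Your approach is correct and essentially the same as the paper's: handle $|F_{4^+}(v)|=0$ by a direct count (at most $\lfloor d(v)/2\rfloor$ neighbors of degree $\le 5$, each receiving at most $1$), and for $|F_{4^+}(v)|=s\ge 1$ cut the cyclic neighborhood at the $4^+$-faces into $s$ fans, apply Fact~\ref{fact3} to each, and telescope. The one bookkeeping point to fix is that the fans partition the $d(v)$ edges exactly, so $\sum k_i = d(v)$ (not $d(v)-m$); hence $\sum \frac{k_i-1}{2} = \frac{d(v)-s}{2}$ on the nose, which is the paper's computation and resolves your hesitation.
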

\begin{proof}
First assume that $F(v)=F_3(v)$. i.e. the case when $|F_{4^+}(v)|=0$. Then, there are at most $\lfloor\frac{d(v)}{2}\rfloor$  of $(v,u)$ edges such that $d(u)\leq 5$ with $E_{3,3}(v,u)$ since we avoid the configuration $(b)$. For $d(v)=8$, $v$ lose charge to $N_4(v)$, and $N_5(v)$ by $(R4)$, and $(R5)$ respectively, and thus $v$ lose highest charge to $N_4(v)$. So, $v$ lose at most  $\frac{d(v)}{2}*\frac{1}{2}=\frac{d(v)}{4}<\frac{d(v)}{2}$ to $N(v)$ when $d(v)=8$. For $d(v)\geq 9$, $v$ lose highest charge to $N_3(v)$ by $(R1)$, and thus $v$ lose at most $\frac{d(v)}{2}$ when $d(v)\geq 9$. Thus, when $|F_{4^+}(v)|=0$, the result is true.\\
When $|F_{4^+}(v)|=1$, let $f_1=F_{4^+}(v)$, let $u_1,u_{d(v)}\in N(v)$ such that $(v,u_1),(v,u_{d(v)})$ incidents with $f_1$. Then, according to the Fact 3, $k=d(v)$, thus $v$ lose at most $\frac{d(v)-1}{2}$. So, when $|F_{4^+}(v)|=1$, the result is true. Now consider $|F_{4^+}(v)|=s$. Let $\{f_1,f_2,\dots, f_s\}$ be $F_{4^+}(v)$ faces, and the edges $(v,u_1)$ and $(v,u_{d(v)})$ incidents with $f_1$, $(v,u_{k_1})$ and $(v,u_{k_1+1})$ incidents with $f_2$, $(v,u_{k_2})$ and $(v,u_{k_2+1})$ incidents with $f_3$, $\dots$,  $(v,u_{k_{i-1}})$ and $(v,u_{k_{i-1}+1})$ incidents with $f_i$, $\dots$,$(v,u_{k_{s-2}})$ and $(v,u_{k_{s-2}+1})$ incidents with $f_{s-1}$ , and $(v,u_{k_{s-1}})$ and $(v,u_{k_{s-1}+1})$ incidents with $f_s$. By Fact 2, $v$ lose at most $\frac{k_1-1}{2}$ to the vertices $\{u_1,u_2,\dots, u_{k_{1}}\}$, $\frac{k_2-(k_1+1)}{2}$ to the vertices $\{u_{k_1},u_{{k_1}+1},\dots, u_{k_{2}}\}$, $\frac{k_3-(k_2+1)}{2}$ to the vertices $\{u_{k_2+1},u_{{k_2}+2},\dots, u_{k_{3}}\}$, $\dots,$ $\frac{k_{i}-(k_{i-1}+1)}{2}$ to the vertices $\{u_{k_{{i-1}+1}},u_{k_{{i-1}+2}},\dots,u_{{k_i}}\},\dots, $ $\frac{k_{s-1}-(k_{s-2}+1)}{2}\}$ to the vertices\\ $\{u_{k_{{s-2}+1}},u_{k_{{s-2}+2}},\dots,u_{{k_{s-1}}} \}$, and $\frac{d(v)-(k_{s-1}+1)}{2}$ to the vertices $\{u_{k_{{s-1}+1}},u_{k_{{s-1}+1}},\dots, u_{d(v)} \}$. Thus, $v$ lose at most, $\frac{k_1-1}{2}+\frac{k_2-(k_1+1)}{2}+\frac{k_3-(k_2+1)}{2}+\dots+\frac{k_{i}-(k_{i-1}+1)}{2}+\dots+\frac{k_{s-1}-(k_{s-2}+1)}{2}+\frac{d(u)-(k_{s-1}+1)}{2}=\frac{d(v)-s}{2}$. \\

\end{proof}

\begin{fact}
    \label{fact5}
    If $v\in V(G)$ such that $d(v)\geq 8$ with  $d(v) + |F_{4^+}(v)|\geq$ 12, then $v$ ends with a non-negative charge.

\end{fact}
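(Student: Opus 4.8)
The plan is to deduce Fact~\ref{fact5} almost immediately from Lemma~\ref{lem1}, once we pin down the sign of every transfer touching $v$. First I would record the elementary observation that a vertex $v$ with $d(v)\geq 8$ never \emph{receives} charge under the rules (R1)--(R7): a scan of the rules shows that charge flows into a vertex only when that vertex has degree $3$ (rules R1, R2, R3), degree $4$ (R4, R5), or degree $5$ (R6, R7), and no rule sends charge from a vertex to a face. Consequently the final charge of $v$ is exactly its initial charge $d(v)-6$ minus the total charge $v$ sends to its neighbours (via R1, R3, R4, R6).

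Next I would apply Lemma~\ref{lem1}, which is available since $d(v)\geq 8$: the total charge $v$ loses to its neighbours is at most $\dfrac{d(v)-|F_{4^+}(v)|}{2}$. It therefore suffices to check the inequality
\[
\frac{d(v)-|F_{4^+}(v)|}{2}\ \leq\ d(v)-6 .
\]
Clearing the denominator turns this into $d(v)-|F_{4^+}(v)|\leq 2d(v)-12$, i.e.\ $d(v)+|F_{4^+}(v)|\geq 12$, which is precisely the hypothesis of Fact~\ref{fact5}. Hence $v$ loses at most $d(v)-6$ charge, and its final charge is at least $(d(v)-6)-(d(v)-6)=0$, as claimed.

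I do not expect any real obstacle here: the combinatorial content is entirely carried by Lemma~\ref{lem1} (and, through it, by Fact~\ref{fact3} and the exclusion of configurations (a)--(h)), so the remaining work is the one-line arithmetic above plus the bookkeeping remark that $8^+$-vertices are pure donors. The only point requiring a little care is to state the donor observation cleanly, since it is what licenses us to bound the final charge by ``initial minus outflow'' with nothing added back; if one preferred to avoid even that, one could instead note that any charge $v$ might conceivably receive is nonnegative, which only helps the bound.
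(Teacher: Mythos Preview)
Your proposal is correct and follows essentially the same route as the paper: apply Lemma~\ref{lem1} to bound the outflow by $\frac{d(v)-|F_{4^+}(v)|}{2}$ and then rearrange $d(v)-6-\frac{d(v)-|F_{4^+}(v)|}{2}=\frac{d(v)+|F_{4^+}(v)|-12}{2}\geq 0$. The paper's version is just the one-line computation, omitting your explicit remark that $8^+$-vertices are pure donors under (R1)--(R7), but the substance is identical.
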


\begin{proof}
 By Lemma \ref{lem1} the vertex $v$ has charge at least  $d(v)-6-(\frac{d(v)-|F_{4^+}(v)|}{2})=\frac{d(v)+|F_{4^+}(v)|-12}{2}\geq 0$. i.e. $d(v) + |F_{4^+}(v)|\geq$ 12.   
\end{proof}

\begin{fact}
    \label{fact6}
    Any $v\in V(G)$ such that $d(v)\geq 10$ ends with a non-negative charge.
\end{fact}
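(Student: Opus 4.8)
The plan is to prove Fact~\ref{fact6} by combining Fact~\ref{fact4}, Fact~\ref{fact5}, and Lemma~\ref{lem1}, reducing everything to a careful count of how much charge $v$ can lose under the rules once $d(v)\geq 10$. Let $v$ be a vertex with $d(v)\geq 10$ and write $s = |F_{4^+}(v)|$. By Fact~\ref{fact5} we may assume $d(v)+s \leq 11$, and by Fact~\ref{fact4} we may assume $s\geq 1$ (the case $s=0$, i.e. $|F_3(v)|=d(v)$, being handled there, at least for $d(v)\leq 12$; for $d(v)\geq 13$ one notes directly that $d(v)-6-\lfloor d(v)/2\rfloor\geq 0$). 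So the remaining range is $10\leq d(v)\leq 11$ with $1\leq s\leq 11-d(v)$, i.e. essentially $d(v)=10$, $s=1$ (and $d(v)=11$, $s=0$, already dispatched). Thus the heart of the matter is the single configuration $d(v)=10$, $|F_3(v)|=9$, $|F_{4^+}(v)|=1$, which is exactly configuration $(h)$ of Figure~\ref{nineconfig}.

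The main step is therefore: show that a vertex $v$ with $d(v)=10$, nine incident $3$-faces, and one incident $4^+$-face does not lose more than $d(v)-6 = 4$ charge to its neighbors. Here Lemma~\ref{lem1} gives $v$ loses at most $\frac{d(v)-|F_{4^+}(v)|}{2} = \frac{9}{2}$, which is not quite good enough, so I would sharpen the count by hand. The $4^+$-face accounts for the edges $(v,u_1)$ and $(v,u_{10})$, and the remaining nine consecutive edges all sit between two $3$-faces, forming (with $v$) a $10$-fan as in Fact~\ref{fact3} with $k=10$. By Fact~\ref{fact1}, no two $5^-$-neighbors of $v$ are adjacent, so among $u_1,\dots,u_{10}$ at most five have degree $\leq 5$, and if exactly five do they must be the alternating set $\{u_1,u_3,u_5,u_7,u_9\}$ or $\{u_2,u_4,u_6,u_8,u_{10}\}$. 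In that extremal case, at least four of these five $5^-$-vertices lie on two $3$-faces (only $u_1$ or $u_{10}$ can touch the $4^+$-face), and $v$ avoiding configuration $(h)$ forces that we cannot have four $u_i$'s of degree $3$ among $\{u_2,\dots,u_9\}$ together with $u_1$ or $u_{10}$ of degree $3$. So in the worst case $v$ gives charge $1$ to at most four $3$-neighbors via (R1) and $\frac12$ to one more $5^-$-neighbor via (R3)/(R4), or it gives $1$ to at most three $3$-neighbors plus smaller amounts to the $4$- and $5$-neighbors; in every subcase the total is at most $4$. One must also check the sub-extremal cases with only four $5^-$-neighbors, where $v$ loses at most $4\cdot 1 = 4$, and fewer-than-four cases, which are even easier.

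The hard part will be the bookkeeping in the extremal alternating case: one has to marry the parity/adjacency constraint from Fact~\ref{fact1} with the precise statement of configuration $(h)$ (five degree-$3$ vertices, four among $u_2,\dots,u_9$ plus one of $u_1,u_{10}$) to rule out the scenario where $v$ would be forced to give away $1$ to five neighbors, and to confirm that whenever four of the five alternating $5^-$-vertices have degree $3$, either the fifth has degree $\geq 4$ (so $v$ pays it only $\frac12$) or it is one of $u_1,u_{10}$ sitting on the $4^+$-face (so $v$ pays it via (R3), and in fact the $4^+$-face may absorb part of the cost). I would organize this as a short case split on $|N_3(v)|$ and on which of $u_1,u_{10}$ lies on the $4^+$-face, invoking $(h)$ to kill the one bad branch, and otherwise simply add up the per-rule charges. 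Everything else — $d(v)=11$ with $s=0$, and $d(v)\geq 13$ — follows immediately from Fact~\ref{fact4} and the inequality $d(v)-6\geq\lfloor d(v)/2\rfloor$, so no further work is needed there.
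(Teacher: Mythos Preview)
Your overall strategy matches the paper's: split by $d(v)$ and $s=|F_{4^+}(v)|$, dispatch $d(v)\geq 12$ by Fact~\ref{fact5}, dispatch $d(v)\in\{11,12\}$ with $s=0$ by Fact~\ref{fact4}, and for $d(v)=10$ reduce to $s\leq 1$. The substantive difference is in the remaining case $d(v)=10$, $s=1$. The paper handles it by citing Fact~\ref{fact3} with ``$k=11$'' and ``$v$ starts with charge $11-6=5$'', which is an arithmetic slip (for $d(v)=10$ one has $k=10$ and starting charge $4$, and then Fact~\ref{fact3}/Lemma~\ref{lem1} only yield the bound $\tfrac{9}{2}$, as you observe). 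You correctly recognize this gap and close it by invoking configuration~$(h)$, which the paper's own proof of Fact~\ref{fact6} never mentions. So your route is the same in outline but actually repairs the critical case.

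One point to tighten in your write-up: the assertion that if $|N_{5^-}(v)|=5$ then the $5^-$-neighbours ``must be the alternating set $\{u_1,u_3,u_5,u_7,u_9\}$ or $\{u_2,u_4,u_6,u_8,u_{10}\}$'' is not correct. On the path $u_1u_2\cdots u_{10}$ there are six independent $5$-subsets; four of them contain both $u_1$ and $u_{10}$ and only three vertices from $\{u_2,\dots,u_9\}$. These extra patterns are harmless---with all five of degree~$3$ the vertex $v$ pays at most $3\cdot 1 + 2\cdot\tfrac12 = 4$ via (R1) and (R3)---so your conclusion stands, but the case split should acknowledge them. With that correction, the bookkeeping you describe (use Fact~\ref{fact1} for independence, then rule out the unique bad pattern of four inner degree-$3$ vertices plus one boundary degree-$3$ vertex via configuration~$(h)$) is exactly what is needed.
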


\begin{proof}
     If $d(v)\geq 12$, then by Fact 5, $d(v)+|F_{4^+}(v)|\geq 12$ for $|F_{4^+}(v)|\geq 0$, and thus have the result. Let $d(v)=11$. If $|F_{4^+}(v)|\geq 1$, then we have the result by the Fact 4. Thus, consider the case when $|F_{4^+}(v)|=0$. Then we have the result by Fact 4. Now consider $d(v)=10$. Similarly, if $|F_{4^+}(v)|\geq 2$, we have the result by Fact 5. Thus, we only have to deal with $|F_{4^+}(v)|\leq 1$. If $|F_{4^+}(v)|=1$, by Fact 3, $k=11$ and $v$ lose at most $\frac{11-1}{2}=5$. Note that $v$ starts with charge $11-6=5$, and thus it ends with a non-negative charge. Now consider the case when $|F_{4^+}(v)|=0$. Now, by Fact 4, $v$ ends with a non-negative charge. Thus we have the result.
\end{proof}

\begin{fact}
    \label{fact7}
    Any $v\in V(G)$ such that $d(v)= 9$ ends with a non-negative charge.
\end{fact}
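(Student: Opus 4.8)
The strategy mirrors the structure of Facts~\ref{fact4}, \ref{fact5}, and \ref{fact6}: let $v$ be a vertex with $d(v)=9$, so $v$ starts with charge $9-6=3$, and argue by contradiction that $v$ loses more than $3$. By Fact~\ref{fact5} we may assume $d(v)+|F_{4^+}(v)|\le 11$, i.e. $|F_{4^+}(v)|\le 2$, and by Fact~\ref{fact4} we may assume $|F_{4^+}(v)|\ge 1$. So the whole proof splits into the two cases $|F_{4^+}(v)|=2$ and $|F_{4^+}(v)|=1$. In each case Lemma~\ref{lem1} already gives that $v$ loses at most $\frac{9-|F_{4^+}(v)|}{2}$, which is $\frac72$ when $|F_{4^+}(v)|=1$ and $\frac52$ when $|F_{4^+}(v)|=2$; the latter is already $\le 3$, so the case $|F_{4^+}(v)|=2$ should close immediately, and the real work is the single case $|F_{4^+}(v)|=1$.

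\textbf{The main case $|F_{4^+}(v)|=1$.} Here $v$ is incident with one $4^+$-face $f_1$ and eight $3$-faces, so $v\cup N(v)$ contains a $9$-fan $\{v,u_1,\dots,u_9\}$ with $(v,u_1),(v,u_9)$ on $f_1$ and $E_{3,3}(v,u_i)$ for $2\le i\le 8$. To lose more than $3$, since charge to $5^-$-neighbors via $E_{3,3}$-edges is $1$ apiece (R1) and charge via the $E_{3,4^+}$-edges $(v,u_1),(v,u_9)$ is at most $\tfrac12$ apiece (R3), $v$ would need at least three $3$-vertices among $u_2,\dots,u_8$ receiving full charge $1$, plus extra charge from $u_1,u_9$ or from $f_1$ — but note that under (R3) the charge to $u_1$ or $u_9$ (when a $3$-vertex) may be paid by $f_1$ rather than by $v$. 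The key structural input is that $G$ avoids configurations (d), (e), and also the edge-configurations (a),(b),(c) through Fact~\ref{fact1}. Avoiding (e)(i)($\alpha$) forbids four $5^-$-vertices among $u_2,\dots,u_8$ with three of degree $3$; avoiding (e)(i)($\beta$) forbids the pattern $u_1$ (deg $3$), $u_9$ (deg $\le 7$), $d(u_i)=3$ for $i=3,5,7$; avoiding (e)(i)($\delta$) forbids $u_1,u_8$ and two interior $u_i$ all of degree $3$; and avoiding (e)(ii) handles the sub-case where the two $4^+$-faces... wait — only one $4^+$-face here, so (e)(ii) is not directly relevant, but (e)(i) with its three alternatives is exactly tailored to this configuration. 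I would enumerate the possible independent sets of low-degree neighbors on the fan (using Fact~\ref{fact1}: no two $5^-$-neighbors of $v$ that are themselves adjacent, so low-degree neighbors sit at alternating positions) and check that in every case where the charge drained exceeds $3$, one of the forbidden configurations (d), (e)(i), or an $(a)$-type count forcing (R3)'s face-payment option must appear, a contradiction.

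\textbf{Accounting carefully.} The delicate bookkeeping is the interaction of (R3) with its cases (a) and (b): when $d(v)=9$ and $v$ "falls into case (a) or (b)" relative to a $3$-neighbor $u_1$ on the $4^+$-edge, the charge $\frac12$ is supplied by $f_1$, not by $v$, so $v$'s outflow drops. One must track precisely when $v$ does or does not fall into case (a)/(b) — case (a) requires a single $4^+$-face (our situation), the other neighbor $w$ on that face of degree $\ge 8$, and $|E_{3,3}(v,w)|\ge 3$ with $w$ ranging over degree-$3$ neighbors — and show that whenever $v$ would otherwise pay too much, either $v$ falls into case (a) (so $f_1$ absorbs $\frac12$) or one of configurations (d),(e) is present. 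The main obstacle I anticipate is the sheer case analysis: pinning down, among the $\le 4$ alternating low-degree positions on the $9$-fan, which degree assignments ($3$ vs. $4$ vs. $5$ vs. $6$ vs. $7$) are simultaneously (i) not excluded by Fact~\ref{fact1}, (ii) not one of the configurations (d) or (e)(i)($\alpha$),($\beta$),($\delta$) or (e)(ii), and yet (iii) drain more than $3$ from $v$ — and showing this set is empty. I expect this to reduce, after using the alternating structure, to a short finite check, but it is the crux and must be done with care to make sure the "additional vertex of degree at most $5$/$6$/$7$" clauses in (d) and (e) are invoked with the right thresholds.
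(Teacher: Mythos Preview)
Your plan for the case $|F_{4^+}(v)|=1$ is essentially what the paper does, and the outline there is sound. The gap is in your arithmetic when invoking Lemma~\ref{lem1}: with $d(v)=9$ one has $\tfrac{9-1}{2}=4$ (not $\tfrac72$) and $\tfrac{9-2}{2}=\tfrac72$ (not $\tfrac52$). Consequently Lemma~\ref{lem1} does \emph{not} dispose of the case $|F_{4^+}(v)|=2$: the bound there is $\tfrac72>3$, and a genuine argument is still required.

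That missing argument is exactly where configuration $(e)(ii)$ and the face-payment clause (R3)(b) enter --- the very pieces you set aside as ``not directly relevant.'' When the two $4^+$-faces share the edge $(v,u_9)$, the remaining $8$-fan $\{u_1,\dots,u_8\}$ can drain up to $\tfrac72$ (three interior $3$-vertices at charge $1$ each, plus $u_1$ or $u_8$ at charge $\tfrac12$ via (R3)). If $d(u_9)\le 6$, this pattern is precisely configuration $(e)(ii)$, a contradiction; if $d(u_9)\ge 7$, then (R3)(b) applies (since $|F_{4^+}(v)|=2$, $|F_3(v)|=7$, the vertex $u_9$ on both $4^+$-faces has degree $\ge 7$, and there are $\ge 3$ degree-$3$ neighbors on $E_{3,3}$-edges), so the $\tfrac12$ to $u_1$ or $u_8$ is paid by the $4^+$-face rather than by $v$, cutting $v$'s loss back to at most $3$. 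You need to supply this analysis, and also address the sub-case where the two $4^+$-faces are non-adjacent. As an aside, the paper's written proof of Fact~\ref{fact7} also does not treat $|F_{4^+}(v)|=2$ explicitly, but configuration $(e)(ii)$ and rule (R3)(b) are visibly designed precisely for it.
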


\begin{proof}
  Let $v\in V(G)$ such that $d(v)= 9$. By Fact 5, $d(v)+|F_{4^+}(v)|\geq 12$ for $|F_{4^+}(v)|\geq 3$. Thus, we only have to deal with $|F_{4^+}(v)|\leq 2$. First, consider the case when $|F_{4^+}(v)|=0$. Then, by Fact 4, we have the result. Secondly, consider the case when $|F_{4^+}(v)|=1$. Note that $E_{3,3}(u_i,v)$ for $i\in \{2,3,\dots,8\}$. Thus if $d(u_i)\leq 5$, $u_i$ takes charge from $v$ according to $(R1), (R4)$, and $(R6)$. We have $E_{4^+,3}(u_i,v)$ for $i\in\{1,9\}$, thus $u_i$ take charge from $v$ only by $(R3)$. Thus, $v$ may lose charge $\frac{1}{2}$ to $\{u_1,u_9\}$ when $\{u_1,u_9\}\subseteq N_3(v)$. Assume by contradiction that $v$ has a negative charge. Thus $v$ loses at least $3\frac{1}{5}$ to $N_{5^-} (v)$. Since $G$ avoids the configuration $(b)$, $|N_{5^-}(v)|\leq 5$. If $|N_{5^-}(v)|\leq 3$, $v$ lose at most 3. Thus, $|N_{5^-}(v)|\geq 4$. Suppose $|N_{5^-}(v)|=4$. Now since $v$ loses at least  $3\frac{1}{5}$, $|N_3(v)|=3$ and another $|N_{5^-}(v)|=1$. Since $G$ avoids the configuration $(b)$, $N_{5^-}(v)$ are independent. Thus, this forces to have the configuration $(e)(i)(\alpha)$, or $(e)(i)(\delta)$; a contradiction. Now assume $|N_{5^-}(v)|=5$. Here $|E_{3,3}(u_i,v)|=3$ and $|E_{3,4^+}(u_i,v)|\geq 1$ where $u_i \in N_3(v)$, or $|E_{3,3}(u_i,v)|=2$ with $u_i \in N_3(v)$ and another $|E_{3,3}(u_i,v)|=1$ with $u_i \in N_{5^-}(v)$, and $|E_{3,4^+}(u_i,v)|=2$ with $u_i \in N_3(v)$. Each of these cases is included in $(e)(i)(\beta)$; a contradiction since  $G$ avoids this configuration. Thus, we have the result.
  
\end{proof}

\begin{fact}
    \label{fact8}
    Any $v\in V(G)$ such that $d(v)= 8$ ends with a non-negative charge.
\end{fact}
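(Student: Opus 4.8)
The plan is to mirror the case analysis of Fact \ref{fact7} but now with the tighter starting charge $d(v)-6=2$ and the extra leverage of configurations $(f)$, $(g)$, and (indirectly) $(h)$. As in the previous facts, I would assume for contradiction that a vertex $v$ with $d(v)=8$ ends with negative charge, so $v$ loses more than $2$ to its neighbors, and then use Lemma \ref{lem1} together with Facts \ref{fact4} and \ref{fact5} to dispose of all but a short list of face-patterns. Concretely, Fact \ref{fact5} already gives the result whenever $|F_{4^+}(v)|\ge 4$ (since then $d(v)+|F_{4^+}(v)|\ge 12$), and Fact \ref{fact4} handles $|F_{4^+}(v)|=0$ (i.e.\ $|F_3(v)|=8$); Lemma \ref{lem1} bounds the loss by $\tfrac{d(v)-|F_{4^+}(v)|}{2}=\tfrac{8-|F_{4^+}(v)|}{2}$, which is $\le 2$ already when $|F_{4^+}(v)|\ge 4$ and equals $\tfrac52$ when $|F_{4^+}(v)|=3$. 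So the genuine work is confined to $|F_{4^+}(v)|\in\{1,2,3\}$, and in each of those I would further split according to how many of the neighbours $u_i$ on $3$-faces have degree $\le 5$ (equivalently $\le 4$, since a degree-$5$ vertex can only pull $\tfrac15$ from $v$ by (R6), which is too little to matter once $|F_{4^+}(v)|\ge 1$ forces $|F_3(v)|\le 7$).

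The heart of the argument is the $|F_{4^+}(v)|\in\{2,3\}$ cases. When $|F_{4^+}(v)|=3$, Fact \ref{fact3} (applied fan-by-fan as in the proof of Lemma \ref{lem1}) shows that on each maximal $3$-face fan $v$ loses at most roughly half its length, and because $d(v)=8$ the relevant rule on a $4^+$-to-$3$ edge is (R3) giving only $\tfrac12$, not $1$; counting carefully, the only way to reach a total exceeding $2$ is to have five $3$-faces arranged so that four independent degree-$3$ rim vertices each receive $\tfrac12$ plus additional charge — and that is exactly configuration $(g)$, which $G$ avoids. Similarly, for $|F_{4^+}(v)|=2$ (so $|F_3(v)|=6$), the two $4^+$-faces split the rim into (up to) two fans of $3$-faces; pushing the loss past $2$ forces four degree-$3$ rim vertices among $\{u_1,u_2,u_4,u_5\}$ and a fifth low-degree vertex $u_7$ of degree $\le 5$, i.e.\ configuration $(f)$ (up to the stated symmetry $(12)(38)(47)(56)$), again excluded. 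For $|F_{4^+}(v)|=1$, Fact \ref{fact3} with $k=8$ (even) gives loss at most $\tfrac{k}{4}=2$, so $v$ already ends nonnegative with nothing more to prove; I would just remark this and note the boundary case where equality could only fail if some supposed degree-$3$ neighbour were actually larger, which Fact \ref{fact1} or the configuration-avoidance handles.

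Throughout I would lean on Fact \ref{fact1} to keep low-degree rim vertices independent (so the $5^-$-neighbours never cluster and the fan-counting in Fact \ref{fact3} is sharp), and on the observation that a $d(v)=8$ vertex never sends charge along an $E_{3,3}$-edge by (R1) (that rule needs $d(u)\ge 9$), nor by (R4) unless the other endpoint of the $E_{3,3}$-edge is a $4$-vertex — which again is controlled by avoiding $(a)$ and $(c)$. The main obstacle, as in Fact \ref{fact7}, is bookkeeping: matching each extremal charge-loss pattern precisely to one of the forbidden configurations $(f)$ or $(g)$ (and recognizing the symmetric variants), rather than any single hard inequality. I expect the cleanest write-up to first state the reduction to $|F_{4^+}(v)|\in\{2,3\}$ in one paragraph, then treat $|F_{4^+}(v)|=3$ via $(g)$ and $|F_{4^+}(v)|=2$ via $(f)$ in two short paragraphs, each ending in ``a contradiction.''
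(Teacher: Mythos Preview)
Your plan is correct and matches the paper's argument almost exactly: both reduce via Fact~\ref{fact5} and Fact~\ref{fact4} to $|F_{4^+}(v)|\in\{1,2,3\}$, dispatch $|F_{4^+}(v)|=1$ by Fact~\ref{fact3} with $k=8$, and then for $|F_{4^+}(v)|=2$ and $|F_{4^+}(v)|=3$ invoke the forbidden configurations $(f)$ and $(g)$ respectively. Two small corrections: configuration $(h)$ plays no role here (it concerns $d(v)=10$), and the paper makes explicit the step you leave implicit, namely enumerating all fan-decompositions of the rim ($1{+}7$, $2{+}6$, $3{+}5$, $4{+}4$ for two $4^+$-faces; $1{+}1{+}6$, $1{+}2{+}5$, $1{+}3{+}4$, $2{+}2{+}4$, $2{+}3{+}3$ for three) and checking via the $d(v)=8$ bound in Fact~\ref{fact3} that only the $3{+}5$ and $3{+}3{+}2$ patterns can exceed loss $2$, which is precisely where $(f)$ and $(g)$ are forced.
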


\begin{proof}
  Let $v\in V(G)$ such that $d(v)= 8$. By Fact 5, $d(v)+|F_{4^+}(v)|\geq 12$ for $|F_{4^+}(v)|\geq 4$. Thus, we only have to deal with $|F_{4^+}(v)|\leq 3$.  First, consider the case when $|F_{4^+}(v)|=0$. Then, by Fact 4, we have the result. Now, suppose $|F_{4^+}(v)|=1$, then $\{v,u_1,u_2,\dots, u_8\}$ is an $8$-fan contained in $v\cup N(v)$ having $E_{3,4^+}(u_1,v)$ and $E_{3,4^+}(u_8,v)$. Thus according to Fact $3$, $k=8$. So, $v$ lose at most $\frac{8}{4}=2$. Now, consider the case when $|F_{4^+}(v)|=2$. Then, $n\cup N(v)$ consists of $1$-fan and $7$-fan, $2$-fan and $6$-fan, $3$-fan and $5$-fan, or two $4$-fans. We can use the Fact $ 3$ for each of the above cases. Thus, those cases becomes $k=1$ and $k=7$, $k=2$ and $k=6$, $k=3$ and $k=5$, and $k=4$ and $k=4$. When $k=1$ and $k=7$, $v$ lose at most $\frac{7+1}{4}=2$ by Fact $3$.  When $k=2$ and $k=6$, $v$ lose at most $\frac{2}{4}+\frac{6}{4}=2$ by Fact $3$.  Note that $G$ avoids the configuration $(f)$. Thus, when $k=3$ and $k=5$, $|E_{3,4}(u,v)|\leq4$, and $|E_{3,3}(u,v)|=0$ or $|E_{3,4}(u,v)|\leq 3$, and $|E_{3,3}(u,v)|=1$. Thus, $v$ lose at most 2 according to $(R4)$ and $(R7)$. When $k=4$ and $k=4$, $v$ lose at most $\frac{4}{4}+\frac{4}{4}=2$. Now consider the case when $|F_{4^+}(v)|=3$. Then $n\cup N(v)$ consists of  two $1$-fans and a $6$-fan, $2$-fan and $1$-fan and a $5$-fan, $3$- fan and $1$-fan and a $4$-fan, two $2$-fans and a $4$-fan, or two $3$-fans and a $2$-fan. Similarly as for $|F_{4^+}(v)|=4$, $v$ lose at most 2 except for the case that $n\cup N(v)$ consists of two $3$-fans and a $2$-fan. Since $G$ avoids the configuration $(g)$, $|N_3(v)|\leq4$, and thus $v$ lose at most $2$. Thus we have the result.   
\end{proof}

\begin{fact}
    \label{fact9}
    Every $12^+$-face ends with a non-negative charge.
\end{fact}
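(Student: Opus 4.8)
The plan is to fix a face $f$ with $d(f)\ge 12$, recall that it begins with charge $2d(f)-6$, and show that it gives away a total of at most $\tfrac32 d(f)$, so that its final charge is at least $2d(f)-6-\tfrac32 d(f)=\tfrac12 d(f)-6\ge 0$.

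The first step is to pin down which vertices $f$ can lose charge to, and how much. Scanning (R1)--(R7), a face parts with charge only through (R2), through the face-version of (R3), and through (R4), (R5), (R7); in each of these the recipient is a $3$-, $4$-, or $5$-vertex incident with $f$ (every other transfer originates at a vertex). So I only need, for each boundary vertex $v$, an upper bound $c_f(v)$ on the charge $f$ sends to $v$, and I would record: $c_f(v)\le 1$ when $d(v)=5$ (only (R7) is relevant and it sends at most $1$ to $v$ from a single face, while the $\tfrac15$ of (R6) comes from a neighbour); $c_f(v)\le 2$ when $d(v)=4$ (from (R4) when $|F_{4^+}(v)|\le 1$ — the extra $\tfrac12$ of (R4) being again from a neighbour — and from (R5) otherwise); and $c_f(v)\le 3$ when $d(v)=3$ (at most $2$ from (R2) since $f$ is a $5^+$-face, or exactly $3$ from (R2) if $f$ is the unique face at $v$, plus at most $\tfrac12$ from the face-version of (R3) for each of the at most two edges of $v$ that lie both on $f$ and on a $3$-face).

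The second step invokes the hypothesis that $G$ avoids configuration $(c)$, i.e.\ every edge of $G$ has degree-sum at least $10$. Hence a neighbour of a $3$-vertex has degree at least $7$, a neighbour of a $4$-vertex degree at least $6$, and a neighbour of a $5$-vertex degree at least $5$; and a vertex of degree at least $6$ receives nothing from any face. Combining this with the per-vertex bounds, a short case check on the two endpoints gives, for every edge $(v,w)$ lying on the boundary of $f$,
\[
c_f(v)+c_f(w)\ \le\ 3 ,
\]
the only way a summand can reach $3$ being that that endpoint is a $3$-vertex, in which case its partner is a $7^+$-vertex contributing $0$.

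To finish I would distribute this edge bound over the $d(f)$ edge-sides of the boundary walk of $f$: assign to the edge-side carried by $(v,w)$ the weight $\tfrac12\bigl(c_f(v)+c_f(w)\bigr)$. By the displayed inequality the total weight is at most $\tfrac32 d(f)$; on the other hand every boundary vertex of $f$ lies on at least two edge-sides of $\partial f$ (exactly two unless it is a cut-vertex of $f$, in which case more), and its term $c_f(v)/2$ is counted once for each, so the total weight is at least $\sum_v c_f(v)$, which is exactly the charge lost by $f$. Therefore $f$ loses at most $\tfrac32 d(f)$ and ends with charge at least $\tfrac12 d(f)-6\ge 0$. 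I expect the one genuinely delicate point to be the $3$-vertex bound $c_f(v)\le 3$: one has to check that the interaction of (R2) with the face-version of (R3) — whose applicability is restricted by the structural side conditions (a) and (b) — never lets a $3$-vertex draw strictly more than $3$ out of a single $12^+$-face, including the degenerate situations where $v$ is a cut-vertex of $f$. Everything else is routine, and it is worth noting that the estimate is tight exactly at $d(f)=12$, which is presumably why the statement reads ``$12^+$-face''.
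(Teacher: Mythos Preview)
Your argument is correct and follows essentially the same route as the paper: both use the absence of configuration~$(c)$ to ensure that along the boundary walk of $f$ the low-degree vertices are sparse, bound the charge drawn by any such vertex by~$3$, and conclude that $f$ loses at most $\tfrac{3}{2}\,d(f)$ (the paper writes this as $3\lfloor l(f)/2\rfloor$). Your edge-averaging is simply a slightly more explicit packaging of the paper's vertex count, and your per-degree bounds $c_f(v)\le 1,2,3$ for $d(v)=5,4,3$ are a refinement the paper skips by jumping straight to the worst case.
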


\begin{proof}
 Let $f$ be a face of a planar graph such that $l(f)\geq 12$.  Note that $f$ lose charge to $l(f)\cap S_{5^-}$ by $(R1)-(R7)$. Since $G$ avoid the configuration $(c)$, $V(f)\cap S_{5^-}\leq \lfloor\frac{l(f)}{2}\rfloor$. $f$ lose highest charge to $l(f)\cap S_3$ by $(R2)$ when $|F(v)|=1$ for $v\in S_3$. So, it is sufficient to consider the worst-case scenario where  $v\in S_3$ for all $v\in l(f)\cap S_{5^-}$ and $|F(v)|=1$. We will find $l(f)$ that satisfies the following equation.
$2*l(f)-6-3*\lfloor\frac{l(f)}{2}\rfloor\geq 0$. It follows that for $l(f)$ is even, $l(f)\geq 12$, and for $l(f)$ is odd, $l(f)\geq 9$. Thus, we have the result.   
\end{proof}

\begin{fact}
    \label{fact10}
    If $|F(v)|=1$ for $v\in V(G)$ with $d(v)=3$, then $l(f)\geq 15$, where $f=F(v)$.
\end{fact}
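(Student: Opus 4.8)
The plan is to convert the hypothesis $|F(v)|=1$ into a rigid structural statement about $v$, then split the unique incident face into three independent pieces and bound each piece below by $3$.

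First I would record why $|F(v)|=1$ is so restrictive for a degree-$3$ vertex. As in Case~5.1 of the proof of Fact~\ref{fact2}, a degree-$3$ vertex $v$ has $|F(v)|=1$ exactly when $v$ is a cut-vertex and $G-v$ has three components: the number of faces incident with $v$ is $3$ minus the number of bridges at $v$, except that it equals $1$ when all three edges at $v$ are bridges, and a degree-$3$ vertex cannot have exactly two incident bridges (a non-bridge edge at $v$ lies on a cycle through $v$, which must leave $v$ through a second edge, then also a non-bridge). So $|F(v)|=1$ forces all three edges $e_i=(v,w_i)$, $i\in\{1,2,3\}$, to be bridges, with $G-v=G_1\sqcup G_2\sqcup G_3$ and $w_i\in G_i$.

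Next I would analyze the boundary walk of the unique face $f=F(v)$. Because $e_i$ is a bridge, the walk of $f$ traverses $e_i$ twice, and between these two traversals it goes once around the face $f_i$ of $G_i$ into which $e_i$ points (here $G_i$ carries the plane embedding inherited from $G$ after deleting $v$ and the other two components). Hence
\[
 l(f)=\sum_{i=1}^{3}\bigl(2+l(f_i)\bigr)=6+\sum_{i=1}^{3}l(f_i).
\]
I would then bound $l(f_i)$ from below. Every vertex of $G_i$ other than $w_i$ is non-adjacent to $v$, so it has the same degree in $G_i$ as in $G$, namely at least $3$; and $d_{G_i}(w_i)=d_G(w_i)-1\geq 2$ (in fact $d_G(w_i)\geq 7$, since $G$ avoids configuration $(c)$, so $(v,w_i)$ is not a $9^-$-edge). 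Thus each $G_i$ is a simple connected plane graph of minimum degree at least $2$, and every face of such a graph has boundary walk of length at least $3$: a closed walk of length $1$ bounds a loop, and a closed walk of length $2$ bounds either two parallel edges or a single edge forming an entire component, none of which can occur here. Therefore $l(f_i)\geq 3$ for each $i$, and $l(f)\geq 6+3\cdot3=15$.

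The one step requiring genuine care is the first, namely confirming that $|F(v)|=1$ means $G-v$ has three components (rather than two); but this is exactly the dichotomy already used in the proof of Fact~\ref{fact2}, so it presents no new difficulty. Everything afterward is a routine decomposition of the boundary walk of $f$ together with the elementary observation that a simple plane graph with no vertex of degree at most $1$ has all faces of length at least $3$.
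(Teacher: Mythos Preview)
Your proof is correct and follows essentially the same route as the paper's: both observe that $|F(v)|=1$ forces $G-v$ to have three components, that each component contributes a face of length at least $3$ to the boundary walk of $f$, and that the three bridges at $v$ contribute a further $6$, giving $l(f)\geq 3\cdot 3+6=15$. Your version is simply more explicit about why $|F(v)|=1$ implies three components and about how the boundary walk of $f$ decomposes.
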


\begin{proof}
    Let $v\in V(G)$ with $d(v)=3$ such that $|F(v)|=1$. Then, $G-\{v\}$ has three components. Since $G$ is a simple planar graph and $N(v)\in S_{7^+} $ as $G$ avoids the configuration $(c)$, each component of $G-\{v\}$ has face degree at least 3. Thus, in $G$ with the 6 edges contributed by $(v, u_i)$, where $u_i\in N(v)$, $l(f)\geq 3*3+6=15$. 
\end{proof}

\begin{fact}
    \label{fact11}
    Every  face $f$ with $5\leq l(f)\leq 11$ ends with a non-negative charge.
\end{fact}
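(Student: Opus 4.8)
The plan is to proceed by a case analysis on $l = l(f)$, since the initial charge of $f$ is $2l - 6$, and $f$ only donates charge to incident $5^-$-vertices via rules (R2), (R3), (R4), (R6), (R7). First I would record the per-vertex donation bounds: by (R2) a $3$-vertex $v$ incident with $f$ takes at most $2$ from $f$ (the full $3$ only when $|F(v)| = 1$, which by Fact~\ref{fact10} forces $l(f) \ge 15$, hence cannot occur here, so in our range a $3$-vertex takes at most $2$, and takes exactly $1$ from $f$ if $f$ is a $4$-face — but $f$ has $l \ge 5$ so it gives $2$ to each incident $3$-vertex with $|F(v)|\ge 2$); by (R3) an incident $3$-vertex on an edge $E_{3,4^+}$ may instead take $\frac12$; by (R4), (R7) a $4$-vertex takes at most $2$ and a $5$-vertex at most $1$ (in fact at most $2/|F_{4^+}(v)|$ resp. $1/|F_{4^+}(v)|$, and both are at most the displayed values). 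The dominant donation is therefore $2$ per incident $5^-$-vertex, achieved only by $3$-vertices (or $4$-vertices on a $5^+$-face).

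The key structural input is Fact~\ref{fact1}: no two $5^-$-vertices on the boundary of $f$ are adjacent (since $G$ avoids configurations (b) and (c), any two adjacent low-degree vertices would form a forbidden edge). Hence among the $l(f)$ boundary vertices at most $\lfloor l(f)/2 \rfloor$ are $5^-$-vertices, and the worst case is all of them being $3$-vertices each drawing $2$. So it suffices to check
\[
2l - 6 - 2\left\lfloor \tfrac{l}{2} \right\rfloor \ge 0.
\]
For $l$ even this reads $2l - 6 - l = l - 6 \ge 0$, i.e. $l \ge 6$; for $l$ odd it reads $2l - 6 - (l-1) = l - 5 \ge 0$, i.e. $l \ge 5$. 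This handles $l \in \{5\} \cup \{6,7,8,9,10,11\}$ except the single case $l = 6$ even being tight (charge exactly $0$, fine) and the genuinely problematic case $l = 5$ even — which does not occur — so the only remaining worry is $\mathbf{l = 5}$: here $2l - 6 = 4$, $\lfloor l/2 \rfloor = 2$, giving $4 - 2\cdot 2 = 0$, again non-negative. Wait — I should double-check $l=5$ more carefully, since a $5$-face could in principle have two $3$-vertices each taking $2$, totalling $4$, leaving $0$; that is still non-negative, so we are fine, but I would verify that no rule lets a $3$-vertex take more than $2$ from a $5^+$-face (it cannot, by (R2), absent the $|F(v)|=1$ case excluded by Fact~\ref{fact10}).

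The main obstacle I anticipate is not the arithmetic but making sure the bound ``at most $2$ per incident $5^-$-vertex'' is airtight: I must confirm that (i) a $4$-vertex never takes more than $2$ from a single face (clear from (R4)/(R5)), (ii) a $5$-vertex never takes more than $1$ from a single face (clear from (R6)/(R7)), and (iii) crucially, that $f$ is never forced to also pay under (R3) to a $3$-vertex that is \emph{already} being counted — but (R3) gives only $\frac12 \le 2$, so replacing a ``$2$'' by a ``$\frac12$'' only helps. One genuinely delicate point is whether Fact~\ref{fact10}'s exclusion (a $3$-vertex with $|F(v)|=1$ needs $l(f)\ge 15$) is the \emph{only} way a $3$-vertex pulls $3$ from $f$; inspecting (R2), the charge $3$ is taken precisely when $|F(v)|=1$, so yes. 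Assembling these observations, each $f$ with $5 \le l(f) \le 11$ ends with charge $\ge 2l(f) - 6 - 2\lfloor l(f)/2\rfloor \ge 0$, completing the proof.
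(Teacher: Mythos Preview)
Your proposal is correct and takes essentially the same approach as the paper: invoke Fact~\ref{fact10} to cap each incident $5^-$-vertex's draw from $f$ at $2$, bound the number of such vertices on the boundary by $\lfloor l(f)/2\rfloor$, and verify $2l(f)-6-2\lfloor l(f)/2\rfloor\ge 0$ for $5\le l(f)\le 11$. One small note: your citation of Fact~\ref{fact1} is misplaced (that fact concerns common neighbors in a fan around a fixed $v$, not arbitrary adjacencies on a face boundary); the paper---and in effect your own parenthetical---derives the $\lfloor l(f)/2\rfloor$ bound directly from the absence of configuration~(c).
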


\begin{proof}
  Let $f$ is a face in $G$ such that $5\leq l(f)\leq 11$. Then for all $v\in l(f)\cap S_3$,  $|F(v)|\geq 2$. Otherwise, $l(f)\geq 15$ by Fact 10; a contradiction. Thus each $v\in l(f)\cap S_{5^-}$ take at most $2$ from $f$ by $(R1)-(R7)$. We use the same equation used in Fact \ref{fact 9} to prove this. Since $G$ avoid the configuration $(c)$, $V(f)\cap S_{5^-}\leq \lfloor\frac{l(f)}{2}\rfloor$. $f$ lose at most $1$ to each of $v\in l(f)\cap S_{5^-}$. Thus, $v$ lose at most
$2*l(f)-6-2*\lfloor\frac{l(f)}{2}\rfloor$ and $2*l(f)-6-2*\lfloor\frac{l(f)}{2}\rfloor\geq 0$  for $l(f)\geq 6$, and  for $l(f)\geq 5$, when $l(f)$ is even and odd respectively. Thus, we have the result.   
\end{proof}

\begin{fact}
    \label{fact12}
    Every $4$-face, $f$, ends with a non-negative charge.
\end{fact}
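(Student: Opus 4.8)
The plan is to bound the charge that a $4$-face $f$ can lose under rules (R1)--(R7). A $4$-face starts with charge $2\cdot 4-6=2$. Inspecting the rules, a face only gives charge to incident vertices of degree at most $5$: specifically, a $3$-vertex can take charge $1$ from an incident $4$-face by (R2) (and $\tfrac12$ from a $4^+$-face adjacent along a $(u,v)$-edge by (R3)); a $4$-vertex can take charge $1$ from an incident $4$-face by (R4), or $\tfrac{2}{|F_{4^+}(v)|}$ by (R5); and a $5$-vertex can take charge $\tfrac{1}{|F_{4^+}(v)|}$ from an incident $4$-face by (R7). Since $f$ has length $4$, its four incident vertices lie on a $4$-cycle, and by Fact~\ref{fact1} no two $5^-$-vertices on $f$ that are adjacent along $f$ can both have small degree in the forbidden sense; more precisely, Fact~\ref{fact1} implies $f$ has at most two vertices of degree at most $5$, and these two must be non-consecutive (opposite) on the $4$-cycle, since any two consecutive vertices of $f$ are adjacent in $G$.

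The key steps, in order, are: (1) record that $f$ begins with charge $2$ and only loses charge to incident $5^-$-vertices; (2) use Fact~\ref{fact1} (equivalently, the fact that $G$ avoids configurations (b) and (c)) to show $f$ is incident with at most two $5^-$-vertices, and that if it is incident with two, they are the two opposite vertices of the $4$-cycle bounding $f$; (3) check that under (R2), (R4), (R5), (R7) a $4$-face gives at most $1$ to any single incident vertex (for a $3$-vertex with $|F(v)|\ge 2$ this is $1$ by (R2); for a $4$-vertex this is $1$ by (R4) or $\le 1$ by (R5) since $|F_{4^+}(v)|\ge 1$; for a $5$-vertex this is $\le 1$ by (R7)); note also that a $3$-vertex with $|F(v)|=1$ cannot be incident with a $4$-face by Fact~\ref{fact10}, so the charge-$3$ clause of (R2) never applies here; (4) conclude $f$ loses at most $1+1=2$, hence ends with charge at least $0$.

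I would also make sure the ``two opposite $5^-$-vertices'' bound is airtight: if $f = x_1x_2x_3x_4x_1$ and, say, $x_1,x_2$ both had degree at most $5$, then the edge $x_1x_2$ would be a $10^-$-edge incident with the $4$-face $f$ and would have its degree sum small — but to invoke avoidance of (b) or (c) I need the degree sum at most $9$ (for (c)) or the edge to be incident with a $3$-face (for (b)); the cleaner route is simply Fact~\ref{fact1}, which says two $5^-$-vertices of $N(v)$ with a common neighbor force configuration (b) or (c), and any two consecutive vertices of a $4$-face together with the two other vertices give such a common-neighbor situation. So at most two incident $5^-$-vertices, non-adjacent along $f$, each receiving at most $1$ from $f$.

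The main obstacle I anticipate is the careful case analysis in step (3): the rules (R3)--(R7) branch on $|F_{4^+}(v)|$ and on whether an incident edge is a $3,3$- or $3,4^+$-edge, so one must verify in every branch that the per-vertex charge drawn from a $4$-face is at most $1$ — in particular ruling out a $4$-vertex drawing $2$ from a single $4$-face (which would require $|F_{4^+}(v)|=1$ and that face a $4$-face, giving exactly $1$ by (R4), not $2$) and confirming (R5) only ever gives $\le \tfrac{2}{2}=1$ per $4$-face when $|F_{4^+}(v)|\ge 2$. Once that bookkeeping is done, the count $2 - 2\cdot 1 \ge 0$ closes the proof.
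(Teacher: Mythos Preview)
Your argument has a genuine gap: in step~(3) you assert that a $4$-face gives at most $1$ to any single incident vertex, citing only (R2), (R4), (R5), (R7). But you have overlooked that (R3) can also transfer charge \emph{from the face} $f$ itself. In the ``otherwise'' branch of (R3), when a $3$-vertex $v$ has a neighbor $u$ of degree $9$ falling into case~(a) or~(b), the charge $\tfrac12$ comes from $f_{4^+}(u,v)$ --- which is precisely $f$ when $(u,v)$ lies on $f$. Since a $3$-vertex on $f$ has two edges on $f$, it can in principle draw $1$ from (R2) plus $\tfrac12+\tfrac12$ from two applications of (R3), for a total of $2$ from $f$ alone. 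Your per-vertex bound of $1$ is therefore false, and the conclusion $2-1-1\ge 0$ does not follow.

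The paper closes this gap by observing that whenever the face-to-vertex branch of (R3) fires, the structural conditions in (R3)(a)($\alpha$) or (R3)(b)($\alpha$) force the \emph{other} neighbor $w$ of $u$ on $f$ to satisfy $d(w)\ge 7$. On the $4$-cycle $v,u,w,x$ bounding $f$, this together with avoidance of configuration~(c) (applied to the edge $vx$) gives $d(x)\ge 7$, so $v$ is the \emph{only} $5^-$-vertex on $f$. Hence even if $f$ sends as much as $1+\tfrac12$ (or $1+\tfrac12+\tfrac12$) to $v$, there is no second low-degree vertex to pay, and the total loss stays at most $2$. You need this extra structural observation; the naive per-vertex bound does not suffice.
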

\begin{proof}
 Let $l(f)=4$. $f$ starts with charge $2*4-6=2$, and want to show $f$ lose at most 2.  Since $G$ avoid the configuration $(c)$, $|l(f)\cap S_{5^-}|\leq 2$. If all $v\in l(f)\cap S_{5^-}$ are $4$ or $5$-vertices, i.e. $v\in l(f)\cap (S_4 \cup S_5)$, thus each of such a vertex $v$ take at most $1$ from $f$ by $(R1)-(R7)$. Thus, $f$ loses at most $2$ and we are done. When $v\in l(f)\cap S_3$, let $u\in N(v)$ such that $(v,u)$ $E_{4,3}$, where $4$-face incident with $(v,u)$ is the face $f$. Here we have to consider two cases:
 
 \textbf{Case 1:} $d(u)=8, d(u)\neq 9$, or $d(u)=9$ and $u$ does not fall into cases $(a)$ or $(b)$ in $(R3)$, then $v$ draw at most $1$ from $f$. Thus, $f$ loses at most 2 and we are done. 
 
 \textbf{Case 2:} $d(u)= 9$ and $u$ falls into cases $(a)$ and $(b)$ of $(R3)$. Note that in this case, the $d(w)\geq 8$ where $w\in N(u)$ such that $\{v,u,w\}$ on $f$. Thus, $|V(f)\cap S_3|=1$. So. $f$ lose at most $1\frac{1}{2}$ to $v\in l(f)\cap S_3$ by $(R1)$, and $(R3)$. Thus, we have the result.
    
\end{proof}

\begin{fact}
    \label{fact13}
    Every $3$-face, $f$, ends with a non-negative charge.
\end{fact}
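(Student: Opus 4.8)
\textbf{Proof plan for Fact~\ref{fact13}.}

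The plan is to start $f$ with its initial charge $2l(f)-6 = 0$ and argue that $f$ never loses any charge, so that it trivially ends non-negative. The key observation is that every one of the rules (R2), (R3), (R4), (R5), (R6), (R7) that transfers charge \emph{from a face to a vertex} requires the recipient face to be a $4^+$-face: (R2) sends charge only from $f_4(v)$ and $f_{5^+}(v)$; (R3)'s second clause sends charge from $f_{4^+}(u,v)$; (R4) and (R7) send charge from $f_{4^+}(v)$; (R5) sends $\tfrac{2}{|F_{4^+}(v)|}$ from incident $4^+$-faces; and the rules (R1), (R6) that feed $N_3(v)$ and $N_5(v)$ respectively move charge between vertices only. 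Hence a $3$-face is never the donor in any rule.

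First I would enumerate the rules and, for each, point out that the face appearing as a source is explicitly a $4$-face or a $5^+$-face (or that the rule is a vertex-to-vertex transfer, as in (R1) and (R6)), so that no charge ever leaves a $3$-face. Since $3$-faces also do not receive any charge (there is no rule sending charge to a face), the charge on $f$ stays exactly at $2l(f)-6 = 2\cdot 3 - 6 = 0$. Therefore $f$ ends with charge $0 \geq 0$, which is the claim.

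There is essentially no obstacle here: unlike Facts~\ref{fact9}, \ref{fact11}, and \ref{fact12}, where one must balance the face's surplus against the at most $\lfloor l(f)/2\rfloor$ light vertices it may subsidize, the $3$-face has zero surplus but also zero obligations. The only thing to be careful about is to double-check that none of the seven rules is ever triggered with a $3$-face as the giver — in particular that the phrase ``$f_{4^+}(u,v)$'' in the second half of (R3) genuinely excludes $3$-faces (it does, by the superscript $+$ meaning ``degree at least $4$'') — after which the conclusion is immediate. This completes the proof of Fact~\ref{fact13}.
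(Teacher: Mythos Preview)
Your proposal is correct and follows essentially the same approach as the paper: compute the initial charge $2\cdot 3 - 6 = 0$ and observe that none of the rules (R1)--(R7) ever removes charge from a $3$-face. The paper's proof is simply a terser version of your argument, asserting in one line that $f$ does not lose charge under any rule.
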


\begin{proof}
     Let $l(f)=3$. $f$ starts with charge $2*3-6=0$. $f$ does not lose charge by $(R1)-(R7)$, thus remain non-negative.
\end{proof}

\textbf{Proof of Theorem \ref{maintheorem}}
\begin{proof}
 Consider a connected planar graph $G$ with minimum degree $\delta(G)\geq 3$ that does not contain a configuration as described in Theorem \ref{maintheorem}. Initially, assign a charge of $d(v)-6$ to each vertex and a charge of $2l(f)-6$ to each face. The initial total charge is $-12$, as per the vertex charging equation of Proposition \ref{dischargingequations}. Subsequently, reassign charges following rules $(R1)-(R7)$. We have previously demonstrated that every vertex ends with a non-negative charge (Facts 2, 6, 7, and 8), and similarly, every face ends with a non-negative charge (Facts 9, 11, 12, and 13).

However, this leads to a contradiction, as we initiated the graph with a total charge of $-12$ and concluded with a non-negative charge after the redistribution. Therefore, a planar graph with $\delta(G)\geq3$ must have at least one of the configurations $(a)-(h)$.

\end{proof}

\section{Preliminaries to prove Theorem \ref{IBNconfigurationproof}}

First, we will state an important definition and a lemma. Let $D\subset V(G)$. We say an edge set $E$ is an attachment of $D$ if for every $e\in E$ has at least one endpoint in $D$. Now we will prove the following lemma.
\begin{lemma}
\label{lem2}
Let $v\in V(G)$ be such that $d(v)\geq 8$, $E$ be an attachment of $N(v)$, and $G'=G-E$. 
If there exist a minimum independent dominating set $I'$ of $G'$ such that $|D|= s$ for $s\geq 2$, where $D=I'\cap N(v)$, and $N_{G'}(w_i)=\emptyset$    for some $w_i\in D$, and   $\sum_{j} |N_{G'}(w_j)\setminus N[v]| \leq s-2 $  for $w_j\in D\cap N_{G'}(v)$, then $|I'|>|I|$, where $I' $ is a minimum independent dominating set of $G$.

\end{lemma}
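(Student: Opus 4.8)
The plan is to build a small independent dominating set of $G$ from $I'$ by exploiting the fact that, inside $G'=G-E$, some vertex $w_i\in D$ has become isolated, so $I'$ is ``wasteful'' there in a way that we can recover when the edges of $E$ are reinstated. The guiding principle is: we will show that
\[
I := (I'\setminus D)\cup\{v\}
\]
is an independent dominating set of $G$ of size at most $|I'|-s+1\le |I'|-1$, which immediately gives $|I|<|I'|$; since $I'$ is a minimum independent dominating set of $G'$ and $I$ is merely \emph{some} independent dominating set of $G$, and $\gamma^i(G)\le |I|$, this yields $\gamma^i(G)<\gamma^i(G')$, i.e. the desired inequality on minimum independent dominating sets. (One must be a little careful: if $I$ as written is not independent or not dominating, a local repair is needed; see below.) First I would verify the cardinality bound: $D=I'\cap N(v)$ has $s$ vertices, and we delete all $s$ of them and add the single vertex $v$, so $|I|=|I'|-s+1$.

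Next I would check that $I$ is independent in $G$. The only possible conflicts are between $v$ and a vertex of $I'\setminus D$: but $N(v)\cap I'\subseteq D$ by definition of $D$, and we have removed all of $D$, so $v$ has no neighbour in $I'\setminus D$. (We are using here that we are working in $G$, not $G'$, so this must hold for the full neighbourhood $N_G(v)$; that is exactly how $D$ is defined, as $I'\cap N_G(v)$ — note the lemma writes $N(v)$ for the $G$-neighbourhood.) Then I would check domination. In $G'$, every vertex outside $I'$ was dominated by some member of $I'$. The vertices that might now be undominated in $G$ are (i) the deleted vertices $w\in D$ themselves, (ii) vertices whose only $I'$-dominator lay in $D$. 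For (i): each $w\in D$ is adjacent to $v$ in $G$ (since $D\subseteq N_G(v)$), hence dominated by $v\in I$. For (ii): suppose $x\notin I$ was dominated in $G'$ only by some $w_j\in D$. Then $x\in N_{G'}(w_j)$. If $x=v$, then $v\in I$ dominates itself. If $x\in N[v]$ but $x\ne v$, then $x\in N_G(v)$ so $x$ is dominated by $v\in I$. The remaining bad vertices are those in $\bigcup_j\big(N_{G'}(w_j)\setminus N[v]\big)$ over $w_j\in D\cap N_{G'}(v)$, and the hypothesis says there are at most $s-2$ of them. Also $w_i\in D$ with $N_{G'}(w_i)=\emptyset$ contributes nothing to this union (it dominates only itself in $G'$, and it is handled by (i)). So at most $s-2$ vertices are left undominated, and we simply add them to $I$, obtaining a dominating set $I''$ of $G$ with $|I''|\le |I'|-s+1+(s-2)=|I'|-1$.

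The remaining obstacle — and I expect this to be the only genuinely delicate point — is to restore \emph{independence} after this patching step, since the $\le s-2$ added vertices might be adjacent to $v$ or to each other or to vertices of $I'\setminus D$. The clean way around this is the standard fact that any dominating set contains an independent dominating set of size at most its own: given the dominating set $I''$ of $G$ with $|I''|\le |I'|-1$, greedily extract a maximal independent subset $J\subseteq I''$; then $J$ is independent, and it is dominating because every vertex of $I''\setminus J$ has a neighbour in $J$ (maximality) and every other vertex was dominated by $I''$ hence has a neighbour in $I''$, which either lies in $J$ or (again by maximality) has a neighbour in $J$ — wait, that last implication is false in general, so instead I use: a maximal independent set $J$ of $I''$ dominates $I''$, and $I''$ dominates $V(G)$, so $J$ dominates $V(G)$ only if domination is ``transitive'' one step, which it is not. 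The correct standard statement is: from any dominating set $S$ one can obtain an independent dominating set of size $\le|S|$ by repeatedly, while $S$ is not independent, picking an edge inside $S$ and deleting one endpoint \emph{provided} $S$ stays dominating; a cleaner route for our purposes is simply to choose, for each undominated vertex in step (ii), \emph{itself or a neighbour} to add so as to keep independence, or alternatively to invoke that $\gamma^i(G)\le\gamma(G)+(\text{number of added vertices})$ is not what we want. The robust fix I would actually carry out: note $I'\setminus D$ is already independent in $G$ and dominates all of $V(G)\setminus N[v]$ except the $\le s-2$ exceptional vertices; add $v$ (independent from $I'\setminus D$ as shown), then for each exceptional vertex $x$ either $x$ has all neighbours already dominated/in the set and we can add $x$ if it is non-adjacent to the current set, or if $x$ is adjacent to the current set it is already dominated and we skip it. Iterating, we never add more than one vertex per exceptional $x$, total added $\le s-2$, independence is maintained at each step by construction, and the final set is independent and dominating of size $\le |I'|-1<|I'|$. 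Hence $\gamma^i(G)\le|I'|-1<|I'|=\gamma^i(G')$, which is the assertion $|I'|>|I|$ of the lemma. The main thing to get exactly right in the writeup is this independence-preserving patching argument, matching the precise counting hypothesis $\sum_j|N_{G'}(w_j)\setminus N[v]|\le s-2$ against the one unit of slack ($+1$ for $v$, needing a net gain of at least $1$).
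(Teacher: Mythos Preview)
Your approach is essentially the same as the paper's: replace $D$ by $\{v\}$ in $I'$, then patch in at most $s-2$ vertices to cover what $D$ used to dominate outside $N[v]$, yielding an independent dominating set of $G$ of size at most $|I'|-1$. Your treatment of the independence-preservation step during patching is in fact more careful than the paper's, which only treats the two extreme cases (every exceptional vertex already has a neighbour in $I'\setminus D$, or none does) and omits the intermediate greedy argument you supply.
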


\begin{proof}
 Let $v\in V(G)$ be such that $d(v)\geq 8$, $E$ be an attachment of $N(v)$, and $G'=G-E$. 
Let $I'$  be a minimum independent dominating set of $G'$ such that  $|D|= s$ for $s\geq 2$, where $D=I'\cap N(v)$, and $N_{G'}(w_i)=\emptyset$ for some $w_i\in D$, and $\sum_{i} |N_{G'}(w_j)-N(v)| \leq s-2 $ for $w_j\in D\cap N_{G'}(v)$. Without loss of generality, take $N_{G'}(w_1)=\emptyset$, where $w_1\in D$. We want to show $|I'|>|I|$, where $I, I' $ are minimum independent dominating sets of $G$ and $G'$ respectively. Suppose not, i.e. $|I'|\leq|I|$. Note that  $w_1\in I'$ as $d_{G'}(w_1)=0$. Thus, $N(w_1)\cap I'=\emptyset$, otherwise, $I'\setminus \{w_1\}$ is a independent dominating set of $G$ of size $|I'|-1$, which is a contradiction since $|I'|\leq |I|\leq |I'|-1<|I'|$. Thus, $v\not \in I'$. If for every $v_i\in N_{G'}(w_i)-N[v]$, if $N(v_i)\cap (I'-D)\neq \emptyset$, then $(I'-D)\cup \{v\}$ is an independent set of $G$ with size $|I'|-(s-2)+1=|I'|-s+3 < |I'|$ which is a contradiction since $|I'|\leq |I|\leq |I'|-1<|I'|$. Thus, $|I'|>|I|$.

Note that if for every $v_i\in N_{G'}(w_i)-N[v]$, if $N(v_i)\cap (I'-D)=\emptyset$, then $\bigcup_{i} v_i\cup (I'-D)\cup \{v\}$ is an independent set of $G$ with size $|s-2|+|I'|-s+1=|I'|-1 < |I'|$ which is a contradiction since $|I'|\leq |I|\leq |I'|-1<|I'|$. Thus, $|I'|>|I|$.

\end{proof}

\begin{fact}
    \label{Fact14}
    Let $u\in D$ such that $d_{G'}(u)=0$, then $N_G(u)\cap I'=\emptyset$
\end{fact}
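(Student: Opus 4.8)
The statement to prove is Fact~\ref{Fact14}: if $u\in D$ (where $D = I'\cap N(v)$ for a minimum independent dominating set $I'$ of $G' = G-E$) and $d_{G'}(u)=0$, then $N_G(u)\cap I' = \emptyset$. This is essentially the isolated step that was extracted inline in the proof of Lemma~\ref{lem2}, now stated as a standalone fact, and my plan is to reproduce that argument cleanly.

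\textbf{Approach.} The plan is a short proof by contradiction using the minimality of $I'$ as an independent dominating set of $G'$. First I would observe that since $d_{G'}(u)=0$, the vertex $u$ is isolated in $G'$, so $u$ must belong to \emph{every} dominating set of $G'$ — in particular $u\in I'$ (which is consistent with the hypothesis $u\in D\subseteq I'$). Next, suppose for contradiction that there is some $w\in N_G(u)\cap I'$. Since $E$ is an attachment of $N(v)$ and $u\in N(v)$, every edge of $E$ incident to $u$ has been deleted; because $d_{G'}(u)=0$, in fact \emph{all} edges of $G$ at $u$ were deleted, so $N_G(u)$ and $N_{G'}(u)=\emptyset$ are unrelated and the edge $uw$ of $G$ is not present in $G'$. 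Hence $u$ and $w$ are non-adjacent in $G'$, so $I'$ being independent in $G'$ does not prevent both from lying in $I'$ — so far no contradiction from independence alone.

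\textbf{Key step.} The contradiction must instead come from minimality: I would argue that $I' \setminus \{u\}$ is still a dominating set of $G'$, which would contradict the minimality of $|I'|$. To see this, note the only vertex that $u$ could be uniquely responsible for dominating in $G'$ is $u$ itself (since $u$ has no neighbours in $G'$, it dominates nothing else). But if $w\in N_G(u)\cap I'$, then $w\in I'\setminus\{u\}$, and… here is the subtlety: in $G'$, $w$ need not be adjacent to $u$, so $w$ does not dominate $u$ in $G'$ either. So I should be more careful. Re-reading the inline version in Lemma~\ref{lem2}'s proof: the argument there is that if $N_G(w_1)\cap I' \neq\emptyset$ then $I'\setminus\{w_1\}$ is an independent dominating set \emph{of $G$} of size $|I'|-1$. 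That is the right framing: $I'\setminus\{u\}$ dominates every vertex of $G$ except possibly $u$, and $u$ is dominated in $G$ by its $G$-neighbour $w\in I'\setminus\{u\}$; moreover $I'\setminus\{u\}$ is independent in $G$ provided $I'$ is — but wait, $I'$ is independent in $G'$, not $G$, so deleting edges could have allowed two $I'$-vertices to be $G$-adjacent. So the cleanest correct statement requires $I'$ to also be independent in $G$; the surrounding context of Lemma~\ref{lem2} supplies this because the edges deleted are an attachment of $N(v)$ and one analyzes $I'$ accordingly. For the purposes of this fact, I would simply mirror the Lemma~\ref{lem2} argument.

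\textbf{Proof sketch I would write.} Suppose $w\in N_G(u)\cap I'$. Since $u$ is isolated in $G'$, we have $u\in I'$. Consider $I'' = I'\setminus\{u\}$. Every vertex of $G$ other than $u$ that was dominated by $u$ in $G'$ — there are none, as $d_{G'}(u)=0$ — is irrelevant; and $u$ itself is dominated in $G$ by $w\in I''$. Every other vertex of $G$ dominated by $I'$ is still dominated by $I''$. Hence $I''$ is a dominating set of $G$ (and of $G'$, after noting $w$ dominates $u$ already holds in $G$; in $G'$ one uses that $I'$ dominated all of $V$ and removing the isolated $u$ loses no domination except of $u$, but $u$ has no $G'$-neighbour in $I'$ — so this half genuinely needs $w\in N_G(u)$ to get $u$ dominated, meaning $I''$ dominates $G'$ only if we are willing to leave $u$ undominated, which we cannot). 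The honest resolution: the contradiction is with minimality of $|I'|$ as an independent dominating set, combined with the fact — established in the ambient proof — that $I'$ is also independent in $G$, so $I''$ is an independent dominating set of $G$ with $|I''| = |I'|-1 < |I'|$, contradicting $|I'|\le |I|$ where $I$ is a minimum independent dominating set of $G$. Therefore $N_G(u)\cap I' = \emptyset$.

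\textbf{Main obstacle.} The main subtlety, as the discussion above shows, is keeping straight \emph{which graph} ($G$ or $G'$) independence and domination are being tested in, since edge deletion preserves independence in one direction and domination in the other. I expect the cleanest writeup is to invoke exactly the chain of inequalities $|I'|\le|I|\le|I'|-1<|I'|$ used inside Lemma~\ref{lem2}, treating $u$ as the vertex $w_1$ there, so that Fact~\ref{Fact14} becomes a verbatim extraction of that sub-argument rather than an independent derivation.
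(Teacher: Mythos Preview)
Your proposal is correct and follows essentially the same approach as the paper: assume $N_G(u)\cap I'\neq\emptyset$, observe that $I'\setminus\{u\}$ is an independent dominating set of $G$ of size $|I'|-1$, and derive the contradiction $|I'|\le|I|\le|I'|-1<|I'|$. The paper's proof is exactly the terse two-line version you anticipate in your final paragraph, and it glosses over the same independence-in-$G$ subtlety you flag; your extended discussion of which graph independence and domination are tested in is more careful than the paper itself, but the underlying argument is identical.
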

\begin{proof} 
  Let $u\in I'$ such that $d_{G'}(u)=0$. Suppose to the contrary that $N_G(u)\cap I'\neq \emptyset$. Then, $I'\setminus\{u\}$ is an independent dominating set of $G$ of size $|I'|-1$ which is a contradiction since $|I'|\leq|I|\leq |I'|-1<|I'|$.   
\end{proof}

\begin{fact}
    \label{fact15}
     Let $u\in I'$ such that for every $w\in N_{G'}(u)$, $N_G(w)\cap (I'\setminus \{u\})\neq \emptyset $ and $(u,z)\notin E(G')$ for some $z\in N_G(u)$, then $z \not \in I'$.
\end{fact}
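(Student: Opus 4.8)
The plan is to mimic the structure of Fact~\ref{Fact14} (and of the two \emph{``otherwise''} paragraphs in the proof of Lemma~\ref{lem2}): we want to show that if $u\in I'$ and some edge $(u,z)$ of $G$ with $z\in N_G(u)$ has been deleted, so that $z\notin N_{G'}(u)$, and moreover every surviving neighbour $w\in N_{G'}(u)$ is already dominated by $I'\setminus\{u\}$, then $z$ cannot itself lie in $I'$. First I would argue by contradiction and suppose $z\in I'$. The key point is that under the stated hypothesis the vertex $u$ is ``redundant'' in $I'$: the only vertices of $G'$ whose domination by $I'$ might rely on $u$ are $u$ itself and the vertices of $N_{G'}(u)$, and the hypothesis says each $w\in N_{G'}(u)$ has a neighbour in $I'\setminus\{u\}$, while $u$ itself would be dominated by $z$ once $z\in I'$ — but we must be a little careful, since $z\notin N_{G'}(u)$ means $z$ does not dominate $u$ in $G'$.

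So the argument I would run is: consider $I'':=(I'\setminus\{u\})\cup\{\,\}$, i.e. simply $I'\setminus\{u\}$, viewed inside $G$ (not $G'$). In $G$, the edge $(u,z)$ is present, so $u$ is dominated by $z\in I'\setminus\{u\}$; every other vertex of $G$ was already dominated by $I'$ in $G'\subseteq G$, hence is still dominated by $I'$ in $G$, and the only vertex that could have depended on $u$ for its domination (besides $u$) is a neighbour of $u$ in $G$. A neighbour $w$ of $u$ in $G$ either satisfies $w\in N_{G'}(u)$, in which case the hypothesis gives $N_G(w)\cap(I'\setminus\{u\})\neq\emptyset$, or $w\notin N_{G'}(u)$ — but such a $w$ was dominated in $G'$ not via $u$, hence via some vertex of $I'\setminus\{u\}$, and that still holds in $G$. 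Therefore $I'\setminus\{u\}$ is a dominating set of $G$; it is clearly independent (a subset of the independent set $I'$); so $\gamma^i(G)\le |I'|-1$, i.e. $|I|\le|I'|-1<|I'|$. This contradicts the running assumption $|I'|\le|I|$ that underlies this whole block of facts (exactly as used in Fact~\ref{Fact14}). Hence $z\notin I'$.

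I expect the main obstacle to be bookkeeping about \emph{which graph} domination is being measured in: the hypothesis ``for every $w\in N_{G'}(u)$, $N_G(w)\cap(I'\setminus\{u\})\neq\emptyset$'' is about $G$, while $I'$ is a dominating set of $G'$, and the conclusion is about $I'$ (a set in $G'$) not being able to contain $z$. The one subtle step is handling neighbours $w$ of $u$ in $G$ with $(u,w)\notin E(G')$: one must observe that, since $I'$ dominates $G'$ and $u\notin N_{G'}(w)$, the vertex $w$ is dominated in $G'$ by some member of $I'$ other than $u$, which then lies in $I'\setminus\{u\}$ and still dominates $w$ in the supergraph $G$. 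Once that is in place the contradiction with $|I'|\le|I|$ is immediate, so I would keep the write-up short, in the same terse style as Fact~\ref{Fact14}.

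\begin{proof}
Suppose to the contrary that $z\in I'$. We claim $I'\setminus\{u\}$ is an independent dominating set of $G$. Independence is clear since $I'\setminus\{u\}\subseteq I'$ and $I'$ is independent in $G'\supseteq$ (as a vertex set) $G$, hence independent in $G$. For domination in $G$: every vertex of $G$ other than $u$ and the neighbours of $u$ in $G$ is dominated by $I'$ in $G'$ without using $u$, hence is dominated by $I'\setminus\{u\}$ in $G$. The vertex $u$ is dominated by $z\in I'\setminus\{u\}$ because $(u,z)\in E(G)$. Finally, let $w\in N_G(u)$. If $(u,w)\in E(G')$, i.e. $w\in N_{G'}(u)$, then by hypothesis $N_G(w)\cap(I'\setminus\{u\})\neq\emptyset$, so $w$ is dominated by $I'\setminus\{u\}$. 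If $(u,w)\notin E(G')$, then since $I'$ dominates $w$ in $G'$ and $u\notin N_{G'}(w)$, some vertex of $I'\setminus\{u\}$ dominates $w$ in $G'$, hence in $G$. Thus $I'\setminus\{u\}$ dominates $G$, so $|I|\leq |I'|-1<|I'|$, contradicting $|I'|\leq |I|$. Therefore $z\notin I'$.
\end{proof}
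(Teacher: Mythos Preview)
Your argument follows exactly the paper's route: assume $z\in I'$, claim that $I'\setminus\{u\}$ is an independent dominating set of $G$, and contradict the standing hypothesis $|I'|\le|I|$. The paper's own proof is a single sentence that asserts this claim without even stating the contradiction hypothesis $z\in I'$; your write-up is more explicit, and your domination check is correct and carefully cased out.

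There is, however, a genuine slip in your independence step. You argue that $I'\setminus\{u\}\subseteq I'$ and that $I'$ is independent in $G'$, ``hence independent in $G$.'' But $G'=G-E$ has \emph{fewer} edges than $G$, so the implication runs the wrong way: a set independent in $G'$ need not be independent in $G$, since a deleted edge of $E$ could have both endpoints in $I'\setminus\{u\}$. The paper's one-line proof simply asserts that $I'\setminus\{u\}$ is an independent dominating set of $G$ with no justification at all, so it glosses over the very same point; your version just makes the gap visible by writing down an incorrect reason. To close it one would need to use the structure of the attachment $E$ (every deleted edge meets $N(v)$) together with what has already been established about which vertices of $N(v)$ lie in $I'$, rather than a blanket subgraph inference.
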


\begin{proof}
  Let $u\in I'$ such that for every $w\in N_{G'}(u)$, $N_G(w)\cap (I'\setminus \{u\})\neq \emptyset $ and $(u,z)\notin E(G')$ for some $z\in N_G(u)$. Now, $I'\setminus \{u\}$ is an independent dominating set of $G$ of size $|I'|-1$ which is a contradiction since $|I'|\leq |I'|-1<|I'|$.   
\end{proof}

\section{Proof of Theorem \ref{IBNconfigurationproof}}
Here we prove the Theorem $\ref{IBNconfigurationproof}$

\begin{proof}

By Theorem \ref{PriddyWei}, if a planar graph $G$ with $\delta(G)\geq 3$ contains configurations $(a),(b),$ or $(c)$ from Theorem \ref{maintheorem}, then $b_i(G)\leq 8.$ Hence, we need to consider planar graphs with $\delta(G)\geq 3$ containing the configurations $(d), (e), (f), (g),$ and $(h)$.

Now, we use the lemma \ref{lem2} and Fact \ref{Fact14} to prove that if $G$ has at least one from configurations $(d)-(h),$ then the independent bondage number of $G$ is at most 8. To prove this, we only need to find a good attachment of $N(v)$ such that $|E|\leq8$. An attachment is good if $\gamma_i(G-E)>\gamma_i(G)$.

Consider $(d)(i)$ with $d(v)=10$ (figure \ref{nineconfig} $(d)$), $d(u_i)=3$ for $i=1,3,5,7$, and $d(u_9)\leq 5$ such that $\{ x,y \} (\textit{if exists}) \subseteq N(u_9) \setminus \{u_8, u_9, v\}$, and \\
$E=\{ u_1v, u_1u_2, u_1u_{10}, u_3u_4, u_5u_6, u_7u_8, u_9x, u_9y\}$. $d_{G'}(u_1)=0$. Note that since $u_1\in I'$, $\{v,u_2,u_{10}\}\not\subset I'$ by Fact \ref{Fact14}. Thus, $u_3\in I'$ as $(u_3,u_4)\not \in E(G')$. $u_4\not \in I'$ by Fact \ref{fact15} as $u_3\in I'$ such that $u_2 \in N(u_3)$ and $v\in N(u_3)$ with $u_1\in N(u_2), N(v)$ and $u_1\in I'$. So, $u_5\in I'$ as $(u_5, u_6) \not\in E(G')$ and $\{v, u_4\} \not\subset I'$. By a similar argument, $\{u_7,u_9\} \subset I'$. Thus, $D=\{u_1,u_3,u_5,u_7,u_9\}$ and thus   $s=5$.  $\sum_{i}^{s}(d_{G'}(w_i)-|D_i|)=0 < s-2$  for $w_i \in D$. Thus, this case satisfies the conditions of Lemma \ref{lem2}, and thus $|I'|> |I|$. For $(d)(ii)$ with $d(v)=10$, $d(u_i)=3$ for $i=1,3,5$, and $d(u_i)\leq 4$ for $i=7,9$ such that $\{x\}\subseteq N(u_7)\setminus \{u_6,u_8,v\}$ and $\{y\}\subseteq N(u_9)\setminus \{u_8,u_{10},v\}$, and $E=\{u_1v,u_1u_2,u_1u_{10},u_3u_4,u_5u_6,,u_7x, u_7u_8,u_9y,u_9u_{10}\}$. Here also we get $d_{G'}(u_1)=0$,$s=|D|=|\{u_1,u_3,u_5,u_7,u_9\}|=5$, $\sum_{i}^{s}(d_{G'}(w_i)-|D_i|)=0 < s-2$  for $w_i \in D$.

For $(d)(i)$ with $d(v)=9$, $d(u_i)=3$ for $i=1,3,5$, and $d(u_7)\leq 5$ such that $\{x,y\}\subseteq N(u_7)\setminus \{u_6,u_8,v\}$, and $E=\{u_1v,u_1u_2,u_1u_{9},u_3u_4,u_5u_6,u_7x,u_7y,u_7u_8\}$. $d_{G'}(u_1)=0$, $s=|D|=|\{u_1,u_3,u_5,u_7\}|=4$, $\sum_{i}^{s}(d_{G'}(w_i)-|D_i|)=0 < s-2$  for $w_i \in D$. Thus, this case satisfies the conditions of Lemma \ref{lem2}, and thus $|I'|> |I|$. So, if $G$ has the configuration $(d)(i)$, $b_i(G)\leq 8$. For $(d)(ii)$ with $d(v)=9$, $d(u_i)=3$ for $i=1,3$, and $d(u_i)\leq 4$ for $i=5,7$ such that $\{x\}\subseteq N(u_5)\setminus \{u_4,u_6,v\}$ and $\{y\}\subseteq N(u_7)\setminus \{u_6,u_{8},v\}$, and $ E=\{ u_1v, u_1u_2, u_1u_{9}, u_3u_4, u_5x, u_5u_6, u_7y, u_7u_{8} \}.$ $d_{G'}(u_1)=0$, $s=|D|=|\{u_1,u_3,u_5,u_7\}|=4$, $\sum_{i}^{s}(d_{G'}(w_i)-|D_i|)=0 < s-2$  for $w_i \in D$. Thus, this case satisfies the conditions of Lemma \ref{lem2}, and thus $|I'|> |I|$. So, if $G$ has the configuration $((d)(ii)$, $b_i(G)\leq 8$.

\vspace{0.3cm}

Now consider the configuration $(e)(i)(\alpha)$ (figure \ref{nineconfig} $(e)(i)$). Let  $d(u_i)=3$ for $i=2,4,6$, and $d(u_8)\leq 5$ such that $\{x,y\}\subseteq N(u_8)\setminus \{u_7,u_8,v\}$, and \\$E=\{u_1u_2,u_2v, u_2u_{3},u_4u_5, u_6u_7, u_8x, u_8y,u_8u_9\}$. $d_{G'}(u_2)=0$, $s=|D|=|\{u_2, u_4, u_6, u_8\}|=4$, $\sum_{i}^{s}(d_{G'}(w_i)-|D_i|)=0 < s-2$  for $w_i \in D$. Thus, this case satisfies the conditions of Lemma \ref{lem2}, 
thus $|I'|> |I|$. Now consider the configuration $(e)(i)(\beta)$. Let  $d(u_i)=3$ for $i=1,3,5,7$, and $d(u_9)\leq 7$ such that $\{x_1,x_2,x_3,x_4\}\subseteq N(u_9\setminus \{u_8,y,v\}$, where $y\in N(u_9)$ such that $u_9y$ 
on $f_1$, and $E=\{u_1u_2,u_1v,u_1y, u_3u_{4},u_5u_6,u_7u_8,u_9x_1,u_9y\}$. $d_{G'}(u_1)=0$, $s=|D|=|\{u_1, u_3, u_5, u_7\}|=4$ or $s=|D|=|\{u_1, u_3, u_5, u_7, u_9\}|=5$ (if 
($N(u_9)\setminus \{u_8,v,y,x_1\})\cap I' = \emptyset$). $\sum_{i}^{s}(d_{G'}(w_i)-|D_i|)=3$  for $w_i \in D$ (this case happens only if $u_9\in I'$, thus when $s=5$). So, in this case also, $\sum_{i}^{s}(d_{G'}(w_i)-|D_i|)\leq s-2$. Thus, 
this case satisfies the conditions of Lemma \ref{lem2}, and thus $|I'|> |I|$. Now consider the configuration $(e)(i)(\delta)$. Let  $d(u_i)=3$ for $i=1,3,5,8$, and $E=\{u_1u_2,u_1v,u_1y, u_3u_{4},u_5u_6,u_7u_8,u_8v,u_8u_9\}$, where $y\in N(u_1)$ 
such that $u_1y$ on $f_1$. $d_{G'}(u_1)=0$, $s=4$, $\sum_{i}^{s}(d_{G'}(w_i)-|D_i|)=0 < s-2$  for $w_i \in D$. 
Thus, this case satisfies the conditions of Lemma \ref{lem2}, and thus $|I'|> |I|$. So, if $G$ has the configuration 
$(e)(i)$, $b_i(G)\leq 8$.

\vspace{0.3cm}

Now consider the configuration $(e)(ii)$. Let  $d(u_i)=3$ for $i=1,3,5,7$, and $d(u_9)\leq 6$ such that $\{x_1,x_2,x_3\}\subseteq N(u_9\setminus \{v,y,z\}$, where $\{y,z\}\subseteq N(u_9)$ such that $u_9y$ on $f_1$ and $u_9z$ on $f_2$, and $E=\{u_1u_2,u_1v,u_1y, u_3u_{4},u_5u_6,u_7u_8,u_9y,u_9z\}$. $d_{G'}(u_1)=0$, $s=|D|=|\{u_1, u_3,u_5,u_7\}|=4$ or $s=5$ (if ($N(u_9)\setminus \{z,y,v\})\cap I' = \emptyset$, $\{u_1, u_3,u_5,u_7, u_9\}$). $\sum_{i}^{s}(d_{G'}(w_i)-|D_i|)=|\{ x_1, x_2, x_3\}|=3$  for $w_i \in D$. This case happens only if $u_9\in I'$, thus when $s=5$. So, $\sum_{i}^{s}(d_{G'}(w_i)-|D_i|)=2\leq 5-3$. Thus, $(e)(ii)$ satisfies the conditions of Lemma \ref{lem2}, and thus $|I'|> |I|$. So, $b_i(G)\leq 8$.

\vspace{0.3cm}

Now consider the configuration $(f)$. Let  $d(u_i)=3$ for $i=1,2,4,5$, and $d(u_7)\leq 5$ such that $\{x_1,x_2\}\subseteq N(u_7)\setminus \{u_6,u_8,v\}$, and \\ $E=\{u_2u_3,u_2v, u_2z,u_4y,u_5u_6,u_7x_1,u_7x_2,u_7u_8\}$. This forces $u_1\in I'$. $d_{G'}(u_2)=0$. Note that $\{u_1, u_2,u_4,u_5, u_7\}\subset I'$. Thus,  $s=5$. $\sum_{i}^{s}(d_{G'}(w_i)-|D_i|)=|$\{y,z\}$|=2 <5-2$  for $w_i \in D$. Thus, this case satisfies the conditions of Lemma \ref{lem2}, and thus $|I'|> |I|$. So, if $G$ has the configuration $(f)$, $b_i(G)\leq 8$.

\vspace{0.3cm}

Now consider the configuration $(g)$. Let  $d(u_i)=3$ for $i=2,4,5,7,8$, and \\$E=\{u_2u_3,u_2v, u_2x,u_4y,u_5u_6,u_7z,u_1u_8\}$. This has $|E|=7<8$. $d_{G'}(u_2)=0$, $s=5$ by a similar argument like for configuration $(f)$, we have to write a face for this saying since we deleted edges, it forces some vertices to be in the independent dominating set. $\sum_{i}^{s}(d_{G'}(w_i)-|D_i|)2 <s-2$  for $w_i \in D$ (also I can make this set to be 1 by deleting one more edge from $\{u_8z,u_5y\}$). Thus, this case satisfies the conditions of Lemma \ref{lem2}, and thus $|I'|> |I|$. So, if $G$ has the configuration $(g)$, $b_i(G)\leq 8$.

\vspace{0.3cm}
Now consider the configuration $(h)$. This is similar to $(e)(i)(\alpha)$. Let  $d(u_i)=3$ for $i=1,3,5,7,9$, and  $E=\{u_1u_2,u_1v, u_1x,u_3u_4,u_5u_6,u_7u_8,u_9u_{10}\}$. Here, $|E|=7<8$. $d_{G'}(u_1)=0$, $s=5$, $\sum_{i}^{s}(d_{G'}(w_i)-|D_i|)=0<s-2$  for $w_i \in D$. Thus, this case satisfies the conditions of Lemma \ref{lem2}, and thus $|I'|> |I|$. So, if $G$ has the configuration $(h)$, $b_i(G)\leq 8$.

\end{proof}

\end{document}